\def\@evenhead{\thepage \hfill \shortauthor \hfill}
\def\@oddfoot{}
\def\@evenfoot{}
\def\@evenhead{\thepage \hfill \shortauthor \hfill}
\def\@oddfoot{}
\def\@evenfoot{}
\newtheorem{thm}{Theorem}[section]
\newtheorem{lem}[thm]{Lemma}
\newtheorem{cor}[thm]{Corollary}
\newtheorem{prop}[thm]{Proposition}
\newtheorem{example}[thm]{Example}
\newtheorem{rem}[thm]{Remark}
\newtheorem{que}[thm]{Question}
\numberwithin{equation}{section}
\newcommand{\p}{\partial}
\def\N{\mathcal{N}}
\def\D{\mathcal D}
\def\O{\mathcal O}
\def\g{\mathfrak{g}}
\def\d{\partial}
\def\ker{{\rm Ker\,}}
\def\im{{\rm Im\,}}
\def\Aut {{\rm Aut\,}}
\def\ad{{\rm ad}}
\def\Lie{{\rm Lie}}
\def\m{\mathfrak{m}}
\def\A {\mathbb{A}^n}
\def\v {\varphi}
\def\e {\eta}
\def\wt{\widetilde}
\def\div{{\rm div\,}} 
\begin{document}
\author{Hao Chang}
\title[$W_n$ and $S_n$]{Adjoint quotient maps for restricted Lie algebras $W_n$ and $S_n$}

\address{Mathematisches Seminar, Christian-Albrechts-Universit\"{a}t zu Kiel, 24098 Kiel, Germany.}
\email{chang@math.uni-kiel.de}
\thanks{This work which is part of the author's thesis is supported by China Scholarship Council (210306140109)}

\begin{abstract}
We give an explicit description of adjoint quotient maps for Jacobson-Witt algebra $W_n$ and special algebra $S_n$. An analogue of Kostant's differential criterion of regularity is given for $W_n$. Furthermore, we describe the fiber of adjoint quotient map for $S_n$ and construct the analogs of Kostant's transverse slice. In addition, we analysis the nilpotent elements and generalize some Premet's results to $S_n$.
\quad \\[0pt]
{\it \textbf{2010 Mathematics Subject Classification.}} {17B05, 17B08, 17B50.} \\[0pt]
{\it \textbf{Keywords.}} Jacobson-Witt algebra, special algebra, adjoint quotient, nilpotent elements
\end{abstract}

\maketitle

\section{Introduction}
In \cite{Ko}, Kostant established a series of results about the adjoint quotient maps of the classical Lie algebras.
For the Lie algebras of Cartan type, Premet investigated the nilpotent elements of restricted Lie algebra $W_n$, obtained the analogues of results of Chevalley and Kostant which are well known in the classical theory of Lie algebras. Recently, Bois, Farnsteiner and Shu \cite{BFS} generalized Premet's results to other Lie algebras of Cartan type by defining the Weyl group for non-clasical restricted Lie algebras. Later, in \cite{WCL}, the authors studied the nilpotent variety for the special algebra $S_n$ and proved the nilpotent variety of $S_n$ is an irreducible affine algebraic variety. This indeed confirmed a conjecture of Premet (cf. \cite{P1},\cite{Sk},\cite{WCL}).

Let $\g$ be a restricted Lie algebra and $G$ its automorphism group. Based on the above results, the ring of invariants $k[\g]^G$ was completely described for the cases $W_n$ and $S_n$. Now let $\Phi_G:\g\rightarrow \g/G$, we call $\Phi_G$ the adjoint quotient map. According to \cite{Pr-1} and \cite{WCL}, the ring of invariants $k[\g]^G$ is polynomial ring for $W_n$ and $S_n$. Let $P_{\xi}$ denote the fiber of $\Phi_G^{-1}(\xi)$, where $\xi\in \g/G$. In \cite{Pr-1}, Premet showed that $\Phi_{\xi}$ is irreducible complete intersection for $\g=W_n$. The aim of the paper is to establish the analogue of Kostant differential criterion of regularity for $W_n$ and extend some Premet's results to $S_n$.

Our paper is organized as follows. After recalling some basic definitions and results on the Jacobson-Witt algebra $W_n$ and special algebra $S_n$, we explicitly describe the fiber $P_{\xi}$ for $W_n$. We give a criterion to determine the smooth locus of $P_{\xi}$ (Proposition \ref{pro-1}). In Section \ref{special algebraSn}, we continue to investigate the fiber of adjoint quotient map for $S_n$. The fiber is shown to be an irreducible complete intersection. Motivated by the results of \cite{WCL}, we construct the analogs of Kostant's transverse slice for $W_n$ and $S_n$ and give a description of nilpotent elements in $S_n$.

Some results of Section \ref{Jacobson-Witt algebraWn} are also proved by Premet in his recent preprint \cite{Pr-2} using slightly different methods.
\subsection*{Acknowledgements}
I would like to thank Jean-Marie Bois, Rolf Farnsteiner and Alexander Premet for useful discussions on an earlier version of this paper.
\section{Preliminaries and notations}

\subsection{Restricted Lie algebras of Cartan type $W$}\label{1Wn}
Let $k$ be an
algebraically closed field of characteristic $p>3$,
and let $B_n$ be the truncated polynomial ring in $n$ variants:
$$B_n=k[x_1,\cdots,x_n]/(x_1^p,\cdots,x_n^p)\,\,(n\geq 3).$$

Let $W_n\coloneqq {\rm Der}(B_n)$ be the Jacobson-Witt algebra.
It is a simple restricted Lie algebra and its
$p$-map is the standard $p$-th power of linear operators. Denote by
$D_{i}=\p/\p x_i\in W_n$ the partial derivative with respect to the
variable $x_i$. Then $\{D_1,\cdots,D_n\}$ is a basis of the
$B_n$-module $W_n$, so that $\dim_{k}W_n=np^n$.

Let $0 \neq f\in B_n$.
Throughout the article,
\begin{equation}
\text{ $\deg f$ is the {\bf minimal} degree of each monomial in $f$.} \label{d}
\end{equation}
And let $\deg 0= +\infty $ conventionally.
There is a natural filtration in $B_n$:
$$(B_n)_i=\left\{f \in B_n\,|\,\deg f \geq i\right\},
\qquad \text{for $i=0,\,\cdots,\,n(p-1)$}.$$
Then $W_n$ inherits a filtration,
$$(W_n)_i=\left\{\sum\limits_{j=1}^n  f_jD_j\in W_n \, \big|\,
\deg f_j \geq i+1, \forall j\right\}, \qquad
\text{for $i=-1,\,\cdots,\,n(p-1)-1$}.$$

\subsection{Restricted Lie algebras of Cartan type $S$}\label{1Sn}
The Lie algebra $\wt S_n$ is defined as
$$\wt S_n=\left\{ \sum\limits_{i=1}^nf_iD_i \in W_n \,\big|\,
\div f=\sum\limits_{i=1}^n D_i f_i=0\right\}.$$
The special algebra $S_n$ is the derived
algebra of $\widetilde{S}_n$.
As a vector space, $S_n$ is spanned by the derivations
\begin{equation}\label{S_n generator}
D_{i,j}\{u\}=D_j(u)D_i-D_i(u)D_j\,, \quad u \in B_n\,.
\end{equation}

The Lie algebras $\wt S_n$ and $S_n$ are both restricted Lie algebras
with the same $p$-map as the Lie algebra $W_n$. Moreover,
it is well-known that $S_n$ is a simple restricted Lie algebra with
$\dim_{k}S_n=(n-1)(p^n-1)$ and
$\dim_{k} \wt{S}_n=(n-1)p^n+1$ (cf. \cite[Chapter 4, Theorem 3.5 and 3.7]{SF}).
And there are $n+1$ (resp. $n$) classes of maximal tori of dimension $n$
(resp. $n-1$) in $W_n$ (resp. $S_n$) (cf. \cite{D}).

\subsection{Automorphism groups of $W$ and $S$}\label{1automorphismgroup}
Let $G=\Aut(W_n)$ and $\wt G=\Aut (S_n)$ be the restricted
automorphism groups of  $W_n$ and $S_n$ respectively.
It is a classical result (cf. \cite{D}) that
any automorphism $\phi$ of the $k$-algebra $B_n$ induces an automorphism of
$W_n$ by
\begin{equation}\label{aut}
g(D)=\phi\circ D \circ \phi^{-1}, \quad \text{for any~} D \in W_n\,,
\end{equation}
and in this way,
\begin{eqnarray}
\Aut(W_n) &\cong& \Aut(B_n), \nonumber\\
\Aut(S_n) &\cong& \left\{ \phi \in \Aut(B_n)\,\big|\,
\det\big(\p_i\phi(x_j)\big) \text{~is a non-zero constant}\right\}. \label{aut1}
\end{eqnarray}
According to \cite[Theorem 2]{W},
$G$ and $\wt G$ are  both connected
algebraic groups.
Furthermore, let $\mathfrak{m}$ be the unique maximal ideal of $B_n$.
$G$ is of dimension $np^n-n$ and $\Lie(G)=(W_n)_{0}=\{\sum f_{i}\d_{i}~|~f_i\in\mathfrak{m}\}$(cf.\cite{LN}),
and $\dim \wt G=\dim_{k} S_n+1$ (\cite[Proposition 3.4]{WCL}).
\vspace{0.1cm}

\section{Jacobson-Witt algebra $W_n$}\label{Jacobson-Witt algebraWn}

From now on, we always denote by $\g$ the Jacobson-Witt algebra $W_n$.
\subsection{smooth points}
For any $x\in \g$, we can consider the subring of $x$-constants $B_n^x\subseteq B_n$
to be the space of truncated polynomials annihilated by $x$. We always have $k\subseteq B_n^x.$
Let $\rho$ be the tautological representation of $\g$ in the space $B_n$. Let $P(t,x)$ be the characteristic polynomial of $\rho(x).$
By\cite[Corollary 1]{Pr-1},
\begin{equation}\label{characteristicpolynomialofWn}
P(t,x)=t^{p^n}-\sum_{i=0}^{n-1}\phi_i(x)t^{p^i}.
\end{equation}
Let $\g^x$ be the centraliser of $x$ in $\g$. There exists an open subset of $\g$ consisting of elements $x$ such $\dim \g^x=n$ (\cite[Lemma 1]{Pr-1}).
Hence, $n={\rm min}\{\dim \g^x ~|~x\in \g \}$.

Consider the following open subsets of $\g$
$$U_1=\{x\in \g~|~B_n^x=k\},$$
$$U_2=\{x\in \g~|~\dim \g^x=n\},$$
$$U_3=\{x\in \g~|~(d\phi_0)_x,(d\phi_1)_x,\cdots,(d\phi_{n-1})_x ~\rm{are~~linearly~~independent}\}.$$

Set $$M_{\ad x}=(\ad x)^{p^n-1}-\sum_{i=0}^{n-1}\phi_i(x)(\ad x)^{p^i-1},$$
and
$$M_x=\rho(x)^{p^n-1}-\sum_{i=0}^{n-1}\phi_i(x)\rho (x)^{p^i-1}.$$

We will frequently use the following  two lemmas:
\begin{lem}(\cite[Lemma 7.(i)]{Pr-1})\label{for-1}
For any $x,y\in \g$,
$$M_{\ad x}(y)=\sum_{i=0}^{n-1}(d\phi_i)_x(y).x^{p^i}$$
\end{lem}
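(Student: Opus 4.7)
The plan is to derive the identity by differentiating, in the direction $y$, the Cayley--Hamilton relation for $\rho(x)$. Since $\rho\colon\g\hookrightarrow {\rm End}(B_n)$ is the tautological inclusion of $\g=W_n=\Der(B_n)$, Cayley--Hamilton for $P(t,x)$ reads $\rho(x)^{p^n}=\sum_{i=0}^{n-1}\phi_i(x)\rho(x)^{p^i}$ in ${\rm End}(B_n)$. As the $p$-map on $W_n$ is the usual $p$-th power of a derivation, $\rho(x)^{p^i}=\rho(x^{p^i})$, so this relation already descends to
\begin{equation}\label{CHWn}
x^{p^n}=\sum_{i=0}^{n-1}\phi_i(x)\,x^{p^i}\qquad\text{in }W_n.
\end{equation}

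Next I would differentiate both sides of \eqref{CHWn} at $x$ along $y$, working inside ${\rm End}(B_n)$. Leibniz applied to the right-hand side produces
\[\sum_{i=0}^{n-1}\bigl((d\phi_i)_x(y)\,x^{p^i}+\phi_i(x)\,\Delta_{p^i}\bigr),\]
where $\Delta_N:=\sum_{j=0}^{N-1}x^{j}y\,x^{N-1-j}$ is the directional derivative of $z\mapsto z^N$ at $x$ along $y$ (the expansion of the linear term of $(x+ty)^N$). The key calculation is to simplify $\Delta_{p^k}$. Using the identity $x^{j}y=\sum_{l=0}^{j}\binom{j}{l}(\ad x)^l(y)\,x^{j-l}$ (an immediate induction on $j$) together with the hockey-stick relation $\sum_{j=l}^{N-1}\binom{j}{l}=\binom{N}{l+1}$, one obtains
\[\Delta_N=\sum_{l=0}^{N-1}\binom{N}{l+1}(\ad x)^{l}(y)\,x^{N-1-l}.\]
Specialising to $N=p^k$, Lucas' theorem yields $\binom{p^k}{l+1}\equiv 0\pmod p$ for every $0\le l\le p^k-2$, while $\binom{p^k}{p^k}=1$; hence only the $l=p^k-1$ term survives, and $\Delta_{p^k}=(\ad x)^{p^k-1}(y)$.

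Substituting these values into the differentiated form of \eqref{CHWn} and rearranging gives
\[(\ad x)^{p^n-1}(y)-\sum_{i=0}^{n-1}\phi_i(x)(\ad x)^{p^i-1}(y)=\sum_{i=0}^{n-1}(d\phi_i)_x(y)\,x^{p^i},\]
whose left-hand side is $M_{\ad x}(y)$ by definition. The main obstacle, and the only place where characteristic $p$ genuinely enters, is the collapse of binomial coefficients at $N=p^k$ via Lucas; everything else is formal manipulation of Cayley--Hamilton and the Leibniz rule. One should also verify the identification $\rho(x)^{p^i}=\rho(x^{p^i})$, which is automatic here since the $p$-structure on $W_n=\Der(B_n)$ is the operator $p$-th power, so that $\rho$ is a morphism of restricted Lie algebras.
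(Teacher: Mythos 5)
Your argument is correct and complete. Note that the paper itself offers no proof of this lemma --- it is quoted verbatim from Premet \cite[Lemma 7(i)]{Pr-1} --- so the comparison is with Premet's original argument, which is also based on linearizing the identity $x^{p^n}=\sum_{i=0}^{n-1}\phi_i(x)x^{p^i}$ along $y$ and reading off the coefficient of $t$ in $(x+ty)^{p^k}$. The one genuine difference is how you establish the key fact $\Delta_{p^k}=(\ad x)^{p^k-1}(y)$: the standard route is to quote Jacobson's formula for $p$-th powers, whose degree-one term in $b$ for $(a+b)^{[p]}$ is $(\ad a)^{p-1}(b)$, and iterate; you instead work in the associative envelope $\mathrm{End}(B_n)$, expand $x^jy$ via $L_x=\ad x+R_x$, collapse the sum with the hockey-stick identity to $\binom{p^k}{l+1}$, and kill all but the top term by Lucas. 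This buys a self-contained, characteristic-$p$-transparent verification at the cost of leaving $W_n$ temporarily (the intermediate terms $(\ad x)^l(y)\,x^{N-1-l}$ are not derivations, though they all vanish mod $p$, so the final identity does live in $W_n$); your justification that the relation $\rho(x)^{p^i}=\rho(x^{p^i})$ lets Cayley--Hamilton descend to $W_n$ is also correct, since the restricted structure on $W_n$ is the operator $p$-th power. No gaps.
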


\begin{lem}\label{minimalpolynomialischaracteristicpolynomial}
Let $x\in W_n$. If $B_n^x=k$, then the minimal polynomial of $\rho(x)$ is just
the polynomial $P(t; x)$.
\end{lem}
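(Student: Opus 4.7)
The plan is to combine the Cayley-Hamilton theorem with the fact that $\rho(x)$ acts as a derivation on the commutative algebra $B_n$. By (\ref{characteristicpolynomialofWn}) and Cayley-Hamilton applied to $\rho(x)\in\mathrm{End}_k(B_n)$, we have $P(\rho(x),x)=0$, so the minimal polynomial $m(t)$ of $\rho(x)$ divides $P(t,x)$. Both are monic with $\deg P(t,x)=p^{n}=\dim_{k} B_{n}$, so it suffices to prove $\deg m=p^{n}$; equivalently, since $k$ is algebraically closed, every eigenspace $\ker(\rho(x)-\lambda)\subseteq B_{n}$ is one-dimensional.

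The case $\lambda=0$ is immediate from the hypothesis: $\ker\rho(x)=B_{n}^{x}=k$. For $\lambda\neq 0$, the key observation is that $\rho(x)$ is a derivation of $B_n$, so for any $f,g\in\ker(\rho(x)-\lambda)$ the Leibniz rule in characteristic $p$ gives
\[
\rho(x)(fg^{p-1})=\lambda fg^{p-1}+(p-1)\lambda fg^{p-1}=p\lambda\,fg^{p-1}=0,
\]
whence $fg^{p-1}\in\ker\rho(x)=k$. Since $B_{n}$ is local Artinian with maximal ideal $\m=(x_{1},\dots,x_{n})$ consisting of nilpotents, an element $h\in B_{n}$ is a unit iff $h(0)\neq 0$. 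Whenever $\ker(\rho(x)-\lambda)$ contains an element $g$ with $g(0)\neq 0$, both $g$ and $g^{p-1}$ are invertible, and the relation $fg^{p-1}\in k$ forces $f\in k\cdot g$, giving one-dimensionality of the eigenspace.

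The main obstacle is the ``nilpotent eigenvector'' scenario in which $\ker(\rho(x)-\lambda)\subseteq\m$ for some $\lambda\neq 0$; there the products $fg^{p-1}$ all land in $\m^{p}\cap k=0$ and the multiplicative trick yields no information. To overcome this I plan to invoke the Jordan decomposition $\rho(x)=S+N$, with $S,N\in W_{n}$ commuting derivations of $B_{n}$, $S$ semisimple and $N$ nilpotent. The semisimple part $S$ induces an $\mathbb F_{p}$-grading $B_{n}=\bigoplus_{\mu} A_{\mu}$ as a graded $k$-algebra (so $A_{\mu}\cdot A_{\nu}\subseteq A_{\mu+\nu}$ and $A_{0}=\ker S$) that is preserved by $N$, and the hypothesis translates into $\ker(N|_{A_{0}})=k$. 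One then argues---using the local structure of $B_{n}$ together with the fact that $S$ is semisimple and lies in a maximal torus of $W_{n}$ acting on $B_{n}$ with one-dimensional weight spaces---that each nonzero $A_{\mu}$ is a free $A_{0}$-module of rank one, which transfers the one-dimensionality of $\ker(N|_{A_{0}})$ to every $\ker(N|_{A_{\mu}})$ and completes the proof.
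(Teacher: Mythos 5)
The paper does not actually supply an argument here --- it defers entirely to the proof of Proposition 1.2.3 in Bois's paper [B] --- so your proposal has to be judged as a self-contained proof. Its first half is fine: Cayley--Hamilton gives $m(t)\mid P(t,x)$, the reduction to ``every eigenspace of $\rho(x)$ is one-dimensional'' is correct over an algebraically closed field, the case $\lambda=0$ is the hypothesis, and the computation $\rho(x)(fg^{p-1})=p\lambda fg^{p-1}=0$ together with $g^{p}=g(0)^{p}\in k^{\times}$ does settle the case where the $\lambda$-eigenspace contains a unit. But you have correctly identified that the case $\ker(\rho(x)-\lambda)\subseteq\m$ is the whole difficulty (it genuinely occurs, e.g.\ for $x=\sum\lambda_i x_iD_i$ with $\mathbb F_p$-independent $\lambda_i$), and the resolution you sketch for it does not close the gap.

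Two concrete problems with the sketched second half. First, the central claim --- that each nonzero $S$-weight space $A_{\mu}$ is a free $A_{0}$-module of rank one --- is not a consequence of ``$S$ lies in a maximal torus acting with one-dimensional weight spaces'': the decomposition $B_n=\bigoplus A_{\mu}$ is by weights of the \emph{single} element $S$, and distinct torus weights can collapse onto the same $S$-weight when $S$ is not regular in its torus. For instance, for $S=x_1D_1-x_2D_2$ one has $A_0=k[x_1x_2]$ and $A_{1}=x_1A_0\oplus x_2^{p-1}A_0$, which is not free of rank one. One can check that for this $S$ no commuting nilpotent $N$ achieves $\ker(N|_{A_0})=k$, so the hypothesis $B_n^x=k$ does exclude such configurations --- but that exclusion is exactly the content that needs to be proved, and your sketch gives no argument for it. Second, even granting $A_{\mu}=gA_{0}$ free of rank one, the operator $N|_{A_{\mu}}$ transported to $A_{0}$ via $a\mapsto ga$ is not $N|_{A_0}$ but the twisted operator $a\mapsto N(a)+ha$ with $N(g)=gh$; the one-dimensionality of $\ker(N|_{A_0})$ does not formally transfer to the kernel of $N+m_h$, so an additional argument (e.g.\ a gauge transformation writing $h$ as a logarithmic derivative, or a direct analysis of the Jordan type) is still required. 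As it stands the proof is incomplete at precisely the step it set out to handle; you would do better to follow the cited argument of Bois or to work with generalized eigenspaces $B^{(\lambda)}$ and the product structure $B^{(\lambda)}B^{(\mu)}\subseteq B^{(\lambda+\mu)}$ from the start.
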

\begin{proof}
See the proof of Proposition 1.2.3 in \cite{Bois}.
\end{proof}

We have following proposition to describe the elements which have no non-trivial constants.
\begin{prop}\label{pro-1}Keep notations as above,
$U_1=U_3$.
\end{prop}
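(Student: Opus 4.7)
The plan is to prove both inclusions by leveraging the factorization of $M_{\mathrm{ad}\,x}$ from Lemma~\ref{for-1}. Set
\[
\alpha_x \colon \g \to k^n,\ y \mapsto ((d\phi_i)_x(y))_{i=0}^{n-1}, \qquad \beta_x \colon k^n \to \g,\ (c_i) \mapsto \sum_{i=0}^{n-1} c_i\, x^{[p^i]}.
\]
Lemma~\ref{for-1} reads $M_{\mathrm{ad}\,x} = \beta_x \circ \alpha_x$, so that $x \in U_3$ is equivalent to surjectivity of $\alpha_x$. The strategy is to relate this to injectivity of $\beta_x$, which in turn is controlled by $B_n^x$ through Lemma~\ref{minimalpolynomialischaracteristicpolynomial}.

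\smallskip\noindent\emph{Inclusion $U_1 \subseteq U_3$.} Assume $B_n^x = k$. By Lemma~\ref{minimalpolynomialischaracteristicpolynomial} the minimal polynomial of $\rho(x)$ equals $P(t,x)$, of degree $p^n$, so $\rho(x)$ is cyclic on $B_n$. Since $\rho \colon \g \hookrightarrow \mathrm{End}(B_n)$ is faithful and compatible with the $p$-operation, the elements $x, x^{[p]},\ldots, x^{[p^{n-1}]}$ are linearly independent in $\g$; thus $\beta_x$ is injective and $\mathrm{rank}\,\alpha_x = \mathrm{rank}\,M_{\mathrm{ad}\,x}$. From $(\mathrm{ad}\,x)\,M_{\mathrm{ad}\,x} = 0$ (a consequence of $x^{[p^n]} = \sum \phi_i(x)\,x^{[p^i]}$ obtained via the faithful representation), the image is contained in $\g^x$. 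Cyclicity of $\rho(x)$ forces $C_{\mathrm{End}(B_n)}(\rho(x)) = k[\rho(x)]$, and Jacobson's criterion—that a polynomial in $\rho(x)$ is a derivation iff it is a $p$-polynomial without constant term—identifies $\g^x$ with $\mathrm{span}(x, x^{[p]},\ldots, x^{[p^{n-1}]})$, of dimension $n$. For $y \in \g^x$ a direct calculation gives $M_{\mathrm{ad}\,x}(y) = -\phi_0(x)\,y$, so if $\phi_0(x) \neq 0$ the restriction to $\g^x$ is invertible, forcing $\mathrm{Im}\,M_{\mathrm{ad}\,x} = \g^x$ and hence $\mathrm{rank}\,M_{\mathrm{ad}\,x} = n$.

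\smallskip\noindent\emph{Inclusion $U_3 \subseteq U_1$.} Argue by contrapositive. If $\dim B_n^x \geq 2$, then $\mathrm{rank}\,\rho(x) \leq p^n - 2$, so every codimension-$1$ minor of $\rho(x)$ vanishes and $\mathrm{adj}(\rho(x)) = 0$. Using the $p$-polynomial expansion
\[
\mathrm{adj}(tI - \rho(x)) = (tI - \rho(x))^{p^n-1} - \sum_j \phi_j(x)\,(tI-\rho(x))^{p^j-1}
\]
(valid because $P(t,x)$ is additive in $t$), evaluation at $t = 0$ identifies $\mathrm{adj}(-\rho(x))$ with $M_x$, so $M_x = 0$. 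Extracting the coefficient of $t^{p^i}$ in $\mathrm{tr}(\mathrm{adj}(tI - \rho(x))\,\rho(y)) = \sum_i (d\phi_i)_x(y)\,t^{p^i}$ yields $(d\phi_i)_x(y) = \mathrm{tr}(N_i(x)\,\rho(y))$ with $N_i(x) = \rho(x)^{p^n-1-p^i} - \sum_{j > i}\phi_j(x)\,\rho(x)^{p^j-1-p^i} \in k[\rho(x)]$. The vanishing $M_x = 0$ enforces the algebraic relations $N_i(x)\,\rho(x)^{p^i} = \sum_{j \leq i}\phi_j(x)\,\rho(x)^{p^j - 1}$ in $k[\rho(x)]$, which produces a nontrivial $k$-linear combination of the $N_i(x)$ lying in $\rho(\g)^\perp$ under the trace pairing, and hence a nontrivial linear dependence among the $(d\phi_i)_x$.

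\smallskip\noindent\emph{Main obstacle.} The principal difficulty is the residual case $\phi_0(x) = 0$ in the first inclusion: the clean eigenvalue action on $\g^x$ degenerates, and one must exhibit preimages $y_i \in \g \setminus \g^x$ with $M_{\mathrm{ad}\,x}(y_i) = x^{[p^i]}$ by exploiting the single Jordan block structure of $\rho(x)$ at the $0$-eigenvalue. A parallel care is required to convert the operator identity $M_x = 0$ into a concrete linear dependence among the functionals $(d\phi_i)_x$ in the contrapositive step.
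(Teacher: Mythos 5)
There are genuine gaps in both inclusions, and in each case the missing step is precisely the substantive part of the argument.

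For $U_1\subseteq U_3$: your reduction to $\operatorname{rank}M_{\ad x}=n$ via the factorization $M_{\ad x}=\beta_x\circ\alpha_x$ is sound and matches the paper's, but your computation of the rank only succeeds when $\phi_0(x)\neq 0$, where $M_{\ad x}$ acts on $\g^x$ as the invertible scalar $-\phi_0(x)$. You flag $\phi_0(x)=0$ as the ``main obstacle'' and do not resolve it; this is not a residual case (it contains, e.g., all regular nilpotent elements, and in general every $x$ with $\rho(x)$ non-invertible on $B_n/k$). The paper closes this uniformly by a different computation: since $B_n^x=k$, the elements $x,x^p,\dots,x^{p^{n-1}}$ form a basis of $\g$ as a $B_n$-module, and the identity $M_{\ad x}(f\cdot x^{p^l})=M_x(f)\,x^{p^l}$ together with $M_x(B_n)\subseteq\ker\rho(x)=k$ and $M_x\neq 0$ (the minimal polynomial of $\rho(x)$ has degree $p^n$, while $M_x$ is a polynomial in $\rho(x)$ of degree $p^n-1$) gives $\im M_{\ad x}=k\text{-}\langle x,x^p,\dots,x^{p^{n-1}}\rangle$ directly, with no case split on $\phi_0(x)$. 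You should adopt this (or supply the promised Jordan-block analysis, which is considerably harder).

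For $U_3\subseteq U_1$: the identities $\mathrm{adj}(tI-\rho(x))=(tI-\rho(x))^{p^n-1}-\sum_j\phi_j(x)(tI-\rho(x))^{p^j-1}$ and $(d\phi_i)_x(y)=\mathrm{tr}(N_i(x)\rho(y))$ are correct, and $M_x=0$ does follow from $\dim B_n^x\geq 2$. But the final sentence --- that the relations $N_i(x)\rho(x)^{p^i}=\sum_{j\leq i}\phi_j(x)\rho(x)^{p^j-1}$ ``produce a nontrivial $k$-linear combination of the $N_i(x)$ lying in $\rho(\g)^{\perp}$'' --- is asserted with no construction. These relations live entirely inside the commutative algebra $k[\rho(x)]$ and carry no information about the trace pairing with $\rho(\g)$; I see no way to extract from them scalars $c_i$ with $\sum_i c_i\,\mathrm{tr}(N_i(x)\rho(y))=0$ for all $y\in\g$. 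The paper's argument for this direction is of a wholly different nature: assuming $x\in U_3$ and $B_n^x\neq k$, it picks $0\neq f\in\m\cap B_n^x$ with $\deg f\geq 2$, uses $fx\in\g_1\cap\g^x$ and the explicit description $\im M_{\ad x}=k\text{-}\langle x,x^p,\dots,x^{p^{n-1}}\rangle$ to express $x$ as a bracket forcing $x\in\g_0$, and then handles the case $x\in\g_0$ separately via a conjugation into the torus; you would need to replace your trace argument by something of this kind.
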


\begin{proof}
The nilpotent case is a direct corollary of \cite[Lemma 7]{Pr-1}. We always assume $x$ is neither $p$-nilpotent nor $p$-semisimple.
Let $x=x_s+x_n$ be the Jordan-Chevalley decomposition of $x$. Furthermore, we assume $x_s$ is not regular, for this case, it is easy.

If $B_n^x=k$, thanks to Lemma \ref{minimalpolynomialischaracteristicpolynomial}, the minimal polynomial of $\rho(x)$ is just the polynomial $P(t,x)$. It follows that $x,x^p,\cdots,x^{p^{n-1}}$ are linearly independent.
Thanks to Lemma \ref{for-1}, to verify $x\in U_3$ it is sufficient to check
$\dim M_{\ad x}(\g)=n$.
 For any $f\in B_n$ and $l\in \{0,\cdots,n-1\}$ we have
$$M_{\ad x}(f.x^{p^l})=M_x(f)x^{p^l}.$$
Since $M_x(B_n)\subseteq \ker\rho(x)=k$ and $M_x\neq 0$, we have
$$M_{\ad x}(\g)=k-\langle x,x^p,\cdots,x^{p^{n-1}}\rangle,$$
as desired.

On the other hand, if $x\in U_3$, then it can be easily verified that
$x,x^p,\cdots,x^{p^{n-1}}$ are linearly independent.
Furthermore, this implies that
$$\im M_{\ad x}=k-\langle x,x^p,\cdots,x^{p^{n-1}}\rangle.$$
We divide the discussion into two cases.

\textbf{Case 1:} $x\notin\g_{0}.$

Let $f$ be a nonzero element in $\m\cap B_n^x$. We may assume that ${\rm deg}(f)\geq 2$ (if not, we replace $f$ by $f^2\neq0$). By the choice of $f$ one has $fx\neq 0$ and $fx\in \g_{1}\cap\g^x$, so
$fx=a_{0}x+a_{1}x^p+\cdots+a_{n-1}x^{p^{n-1}}$.
Furthermore, $(fx)^p=f^px^p+(fx)^{p-1}(f)x$. Hence,
$$0=(fx)^p=(a_0)^px^p+(a_{1})^px^{p^2}+\cdots+(a_{n-1})^px^{p^{n}},$$
and
$$x^{p^{n}}=-(\frac{a_0}{a_{n-1}})^px^p-(\frac{a_1}{a_{n-1}})^px^{p^2}-\cdots-(\frac{a_{n-2}}{a_{n-1}})^px^{p^{n-1}}.$$
Since minimal polynomial of $\ad x$ is $P(t,x)$ , we have
$\phi_0(x)=0, \phi_{i}(x)=-(\frac{a_{i-1}}{a_{n-1}})^p,$ for $1\leq i\leq (n-1)$.
Let $a_{s}\neq 0$ and $a_l=0$ for $l< s$. There is a $y\in\g$ with $M_{\ad x}(y)=x$. Then
\begin{eqnarray*}
x&=&(\ad x)^{p^n-1}-\sum_{i=s+1}^{n-1}\phi(x)(\ad x)^{p^i-1}(y)\\
&=&[x^{p^s},(\ad x)^{p^n-p^s-1}-\sum_{i=s+1}^{n-1}\phi(x)(\ad x)^{p^i-p^s-1}(y)]\\
&=&\frac{1}{a_{s}}[fx-\sum_{i\geq s+1}a_{i}x^{p^i},(\ad x)^{p^n-p^s-1}-\sum_{i=s+1}^{n-1}\phi(x)(\ad x)^{p^i-p^s-1}(y)]\\
&=&\frac{1}{a_{s}}[fx,(\ad x)^{p^n-p^s-1}-\sum_{i=s+1}^{n-1}\phi(x)(\ad x)^{p^i-p^s-1}(y)]\in\g_0.
\end{eqnarray*}
We have obtained a contradiction.

\textbf{Case 2:} $x\in\g_{0}.$

 Since $\g_{0}$ is an algebraic Lie algebra, by a conjugation, we can write $x^{p^n}=\sum_{i=1}^{n}\lambda _ix_iD_i$. Note that $x_s$ is not regular, so we can find an element $g\in\m$ and $\deg g\geq2$ such that $x^{p^n}(g)=0$ but $gx^{p^n}\neq 0$.
 Since $[x,gx^{p^n}]=x(g)x^{p^n}$, there exists $g'\in\m$ such that $g'x^{p^n}\in\g^x$ and $x^{p^n}(g')=0$. $g'\in\im\rho(x)$ and $x\in \g_{0}$, it follows that
 $g'x^{p^n}\in\g_{2}$. Now one may use the same argument in Case 1.
 \end{proof}

\begin{rem}
1).~From the proof of Proposition \ref{pro-1}, we have
$$\{x\in \g~|~B_n^x=k\}=\{x\in \g~|~\dim M_{\ad x}(\g)=n\}.$$
This results turn out to be an analogues of results of Premet which describe the smooth points in nilpotent variety of $W_n$ (\cite[Lemma 7]{Pr-1}).\\
2).~In \cite[Corollary 1.2.4]{Bois}, the author proved that $U_1\subseteq U_2$.\\
3).~ In \cite[Theorem 1 and Theorem 2]{Pr-2}, Premet proved $U_1=U_2=U_3$.
\end{rem}

\subsection{adjoint quotient for $W_n$}\label{propertiesofvg}
Choose generators $\phi_{0},\phi_{1},\cdots,\phi_{n-1}\in k[\g]^G$ for $k[\g]^G$ as a $k$-algebra. Set
$$\Phi_G:\g\longrightarrow \mathbb{A}^n,~~~~~~~~~~~x\mapsto (\phi_{0}(x),\phi_{1}(x),\cdots,\phi_{n-1}(x)).$$
We call $\Phi_G$ the adjoint quotient since the comorphism of $\Phi_G$ is just the inclusion of $k[\g]^G$ into $k[\g]$.

Let $P_{\xi}$ denote the fiber $\Phi_{G}^{-1}(\xi)$, where $\xi=(\xi_{0},\cdots,\xi_{n-1})\in\mathbb{A}^n$.
According to \cite[Theorem 3]{Pr-1}, $\varphi_G$ is equidimensional and surjective, each of its fiber is irreducible, has dimension $N-n$. Furthermore, for any $\xi\in\A$, the fiber $P_{\xi}$ contains a point $\D_{\xi}$ with trivial stabilizer and the ideal
$I(P_{\xi})=\{f\in k[\g]~|~f(P_{\xi})=0\}$ is generated by polynomials $\phi_0(x)-\xi_{0},\cdots,\phi_{n-1}(x)-\xi_{n-1}$.
Thanks to \cite[Lemma 12]{Pr-1}, the elements of the orbit $\O_{\xi}=G.\D_{\xi}$ are smooth in $P_{\xi}$. Compare with the classical case (cf.\cite[Proposition 7.13]{Jan}), our situation is different. In general, the smooth locus of $P_{\xi}$ is not the unique dense open orbit $\O_{\xi}$.

\begin{que}
When the fiber $P_{\xi}$ is smooth?
\end{que}
The following Proposition answer the above question. It can also be found in \cite[Theorem 3(ii)]{Pr-2}.
\begin{prop}\label{pro-2}
Let $\xi=(\xi_{0},\cdots,\xi_{n-1})\in\mathbb{A}^n$. $P_{\xi}$ is smooth if and only if $\xi_{0}\neq 0.$
\end{prop}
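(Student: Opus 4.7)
The plan is to combine the Jacobian criterion for the complete intersection $P_\xi$ (which has the expected codimension $n$ in $\g$ by the results cited in Section~\ref{propertiesofvg}) with the equivalence $U_1=U_3$ of Proposition~\ref{pro-1}. Since $P_\xi$ is cut out in $\g$ by the equations $\phi_0-\xi_0=\cdots=\phi_{n-1}-\xi_{n-1}=0$, a point $x\in P_\xi$ is smooth iff the differentials $(d\phi_0)_x,\ldots,(d\phi_{n-1})_x$ are linearly independent, iff $x\in U_3=U_1$, iff $B_n^x=k$. Hence $P_\xi$ is smooth iff every $x\in P_\xi$ satisfies $B_n^x=k$.

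For the ``if'' direction, suppose $\xi_0\neq 0$. Factoring (\ref{characteristicpolynomialofWn}) as
\[
P(t,x) = t\,\bigl(t^{p^n-1}-\phi_{n-1}(x)t^{p^{n-1}-1}-\cdots-\phi_1(x)t^{p-1}-\phi_0(x)\bigr),
\]
the second factor has nonzero constant term $-\xi_0$, so $0$ is a simple root of $P(t,x)$. Hence $\dim\ker\rho(x)\leq 1$, and since $\rho(x)(1)=0$ we conclude $B_n^x=k$ for every $x\in P_\xi$, so $P_\xi$ is smooth.

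For the ``only if'' direction, suppose $\xi_0=0$. I will exhibit an explicit singular point of $P_\xi$ by considering $x=x_1^{p-1}D_1+y$, where $y=\sum_{i=2}^{n}\lambda_i x_i D_i$ lies in the standard maximal torus of the natural copy of $W_{n-1}\subset W_n$. The two summands commute (since $y$ involves neither $x_1$ nor $D_1$), and a direct computation gives $x(x_1^2)=2x_1^p=0$, so $x_1^2\in B_n^x$ and $B_n^x\neq k$. Using the decomposition $B_n\cong k[x_1]/(x_1^p)\otimes B_{n-1}$, the semisimple operator $\rho(y)$ acts diagonally on the second factor while $\rho(x_1^{p-1}D_1)$ acts only on the first factor with characteristic polynomial $t^p$. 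Restricting to each $\rho(y)$-eigenline $k[x_1]/(x_1^p)\otimes v_\beta$ gives characteristic polynomial $(t-\beta)^p$, whence
\[
P(t,x)=\bigl(P^{W_{n-1}}(t,y)\bigr)^p=t^{p^n}-\sum_{j=0}^{n-2}\bigl(\phi_j^{W_{n-1}}(y)\bigr)^p t^{p^{j+1}}.
\]
Therefore $\phi_0(x)=0$ and $\phi_{j+1}(x)=(\phi_j^{W_{n-1}}(y))^p$ for $j=0,\ldots,n-2$.

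The remaining step, which I expect to be the main obstacle, is to choose $\lambda_2,\ldots,\lambda_n$ so that $\Phi_G(x)=\xi$, i.e., $\phi_j^{W_{n-1}}(y)=\xi_{j+1}^{1/p}$ for $j=0,\ldots,n-2$. I plan to invoke the Chevalley-type restriction theorem of \cite{BFS} applied to $W_{n-1}$, which gives surjectivity of the invariant map from the standard maximal torus of $W_{n-1}$ onto $\mathbb{A}^{n-1}$; combined with the existence of $p$-th roots in the algebraically closed field $k$, this yields the required toral $y$, and hence a singular point of $P_\xi$, completing the proof.
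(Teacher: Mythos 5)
Your proposal is correct, and it splits against the paper's proof as follows. The ``if'' direction coincides in substance with the paper's: $\xi_0\neq 0$ makes $0$ a simple root of $P(t,x)$, hence $B_n^x=k$ for every $x\in P_\xi$, and Proposition \ref{pro-1} together with the fact that $I(P_\xi)$ is generated by the $\phi_i-\xi_i$ in the expected codimension gives smoothness (the paper phrases this through regular semisimplicity, but the content is the same). The ``only if'' direction is where you genuinely diverge. The paper degenerates: given any $y$ in a smooth fiber it invokes Premet's Lemma 14 to find a toral $y'\in\overline{G.y}\cap T_n\subseteq P_\xi$, which is again smooth, hence has $B_n^{y'}=k$ by Proposition \ref{pro-1}; a semisimple element whose minimal polynomial is all of $P(t,y')$ is regular semisimple, forcing $\xi_0=\phi_0(y')\neq 0$. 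You instead exhibit an explicit singular point $x=x_1^{p-1}D_1+y$ in every fiber with $\xi_0=0$; your computations ($x_1^2\in B_n^x$, and $P(t,x)=\bigl(P^{W_{n-1}}(t,y)\bigr)^p$ via the tensor decomposition of $B_n$) are correct, and the step you flagged as the main obstacle does go through: the Chevalley restriction theorem identifies $k[W_{n-1}]^{G'}$ with the invariants of the finite group ${\rm GL}_{n-1}(\mathbb{F}_p)$ on the standard torus, and a finite-group quotient map of an affine space is surjective, so every point of $\mathbb{A}^{n-1}$ is hit by a toral $y$ (one can also see this directly, since any $p$-polynomial of degree $p^{n-1}$ is $\prod_{v\in V}(t-v)^{p^r}$ for some $\mathbb{F}_p$-subspace $V\subseteq k$, realizable as the eigenvalue set of a toral element). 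Your route buys explicit singular points in every non-smooth fiber, which the paper's existence-only degeneration argument does not provide; the paper's route avoids any construction and reuses machinery (Lemma 14 of Premet) already needed elsewhere in the section.
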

\begin{proof}
If $P_{\xi}$ is smooth, then it follows from \cite[Lemma 7]{Pr-1} that $\xi\neq0$, i.e., $x$ is not nilpotent for any $x\in P_{\xi}$.
Take $y\in P_{\xi}$, by virtue of \cite[Lemma 14]{Pr-1}, $\overline{G.y}~\cap~T_n\neq \varnothing$, where $T_n=<x_1\d_1,\cdots,x_n\d_n>$. Let $0\neq y'\in \overline{G.y}~\cap~T_n$. Note that $\Phi_G(y')=\Phi_G(y)$ and $y'$ is also a smooth point in $P_{\xi}$. Hence $(d\phi_0)_{y'},(d\phi_1)_{y'},\cdots,(d\phi_{n-1})_{y'}$ are linearly independent. According to Proposition \ref{pro-1}, $B_n^{y'}=k$, so the minimal polynomial of $\rho(y')$ is just $P(t,y')$. Since, in addition, $y'$ is semisimple. Hence $y'$ is regular semisimple element. This implies
$\xi_{0}=\phi_{0}(y')\neq 0$.

On the other hand, if $\xi_{0}\neq 0$, for any $x\in P_{\xi}$, then $\phi_0(x)=\xi_{0}\neq 0$. It follows that $x$ is regular semisimple element. Hence $B_n^x=k$, Proposition \ref{pro-1} implies that $x$ is smooth in $P_{\xi}$.
\end{proof}

The following example was communicated to the author by Premet.
\begin{example}
For the Witt algebra $W_1$, consider the fiber $P_{1}$, i.e., the solution of $t^p-t=0$ in $W_1$. We have finitely many orbits in the fiber whose representatives are  $(1+x)\d$, $\lambda x\d$ with $\lambda\in \mathbb{F}_p^*$. In particular, the fiber is smooth.
\end{example}

\begin{cor}
If the smooth locus of $P_{\xi}$ equals $\O_{\xi}$, then $\xi_{0}=0$.
\end{cor}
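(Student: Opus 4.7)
The plan is to argue by contrapositive. Assume $\xi_0 \neq 0$; the goal is to show that the smooth locus of $P_{\xi}$ strictly contains the orbit $\O_{\xi}$, which contradicts the hypothesis. By Proposition \ref{pro-2}, under this assumption $P_{\xi}$ is smooth, so its smooth locus coincides with the whole of $P_{\xi}$. It therefore suffices to exhibit a single point of $P_{\xi}$ that does not belong to $\O_{\xi}$.

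To produce such a point, I would reuse the torus construction already invoked in the proof of Proposition \ref{pro-2}. Starting from any $y \in P_{\xi}$, \cite[Lemma 14]{Pr-1} yields a nonzero element $y' \in \overline{G.y} \cap T_n$, where $T_n = \lan x_1\d_1, \ldots, x_n\d_n \ran$. Since $\Phi_G$ is $G$-invariant and continuous, it is constant on $\overline{G.y}$, so $\Phi_G(y') = \xi$, i.e. $y' \in P_{\xi}$.

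The crucial observation is that $y'$ has a positive-dimensional stabilizer in $G$. Consider the $n$-dimensional subtorus
$$T = \{\phi_{\mu} \in \Aut(B_n) : \phi_{\mu}(x_i) = \mu_i x_i,\ \mu_i \in k^*\} \subseteq G.$$
A direct one-line check shows each $\phi_{\mu}$ commutes with the derivation $x_i\d_i$ of $B_n$, hence $T$ fixes every element of $T_n$ under the conjugation action (\ref{aut}); in particular $T \subseteq G_{y'}$, and so $\dim G_{y'} \geq n > 0$. On the other hand, every point of $\O_{\xi} = G.\D_{\xi}$ is $G$-conjugate to $\D_{\xi}$, whose stabilizer is trivial, so the stabilizer of any element of $\O_{\xi}$ is trivial. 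Therefore $y' \notin \O_{\xi}$, giving the strict inclusion $\O_{\xi} \subsetneq P_{\xi}$ required to contradict the hypothesis.

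The only nontrivial ingredient in the argument is the stabilizer computation for the diagonal torus $T$, which is an immediate direct check; everything else is a bookkeeping combination of Proposition \ref{pro-2} and the properties of $\D_{\xi}$ already recorded above. The main conceptual point is simply that in the non-nilpotent case the generic torus element sits inside $P_\xi$ but carries a nontrivial stabilizer, so it cannot lie in the free orbit $\O_\xi$.
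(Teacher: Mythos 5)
Your argument is correct, and it reaches the same intermediate goal as the paper's proof --- namely that $P_{\xi}$ strictly contains $\O_{\xi}$ --- but it gets there with a different discriminating invariant. Both arguments pull a torus element out of $\overline{G.y}\cap T_n$ via \cite[Lemma 14]{Pr-1} and then feed the conclusion into Proposition \ref{pro-2}. The paper argues that if $P_{\xi}$ were the single orbit $G.x$ with $x\in T_n\subseteq \g_{0}$, then $x,x^p,\dots,x^{p^{n-1}}$ would all lie in $\g_{0}=\m W_n$ and so could not form a basis of the free $B_n$-module $\g$, contradicting \cite[Lemma 12]{Pr-1}; note that this proves the stronger statement that \emph{no} fiber $P_{\xi}$ is a single orbit, for every $\xi$. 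You instead compare stabilizers: the diagonal torus of $\Aut(B_n)$ acts diagonally on the monomial basis of $B_n$ and hence commutes with each $x_iD_i$, so your $y'$ has stabilizer of dimension at least $n$, whereas every point of $\O_{\xi}=G.\D_{\xi}$ has trivial stabilizer by \cite[Theorem 3]{Pr-1}. Both discriminators ultimately rest on special properties of $\D_{\xi}$ already recorded in Section \ref{propertiesofvg}; yours is arguably more elementary (a group-theoretic check rather than a module-theoretic one), while the paper's buys the additional fact that fibers are never homogeneous. One small point to make explicit in your write-up: the element produced by \cite[Lemma 14]{Pr-1} is automatically nonzero in your setting, since $\Phi_G(y')=\xi\neq 0$.
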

\begin{proof}
It is sufficient to show that the fiber $P_{\xi}$ contains at least two orbits.
Suppose that $P_{\xi}$ consist of one orbit, i.e., $P_{\xi}=\O_{\xi}=G.\D_{\xi}$. By the same argument as in Proposition \ref{pro-2}, we can find an element $x\in T_n$ such that $P_{\xi}=G.x$. Note that $x\in\g_{0}$, this implies that $x,x^p,\cdots,x^{p^{n-1}}$ contain in $\g_{0}$. On the other hand, $x,x^p,\cdots,x^{p^{n-1}}$ form a basis of the $B_n$-module $\g$(\cite[Lemma 12]{Pr-1}). This is a contradiction.
\end{proof}
\begin{que}
When the smooth locus of $P_{\xi}$ equals $\O_{\xi}$ ?
\end{que}

\begin{rem}
The smooth locus of nilpotent variety $\N=P_{0}$ equals the unique dense open orbit $\O_{0}$ (\cite[Lemma 7]{Pr-1}).
\end{rem}

\section{Special algebra $S_n$}\label{special algebraSn}

\subsection{adjoint quotient for $S_n$}\label{propertiesofvgforSn}
Recall that $\wt G$ is the restricted automorphism group of $S_n$.
Furthermore, $\dim \wt G=\dim_{k} S_n+1=N+1$,
where, for convenience, let
$$N=(n-1)(p^n-1)=\dim S_n\,.$$

Recall that the characteristic polynomial $P(t,x)$ is a $p$-polynomial on $W_n$.
Let $\varphi_i$ be the restriction of $\phi_i$ on $S_n$.
Denote
\begin{equation}\label{characteristicpolynomialofSn}
Q(t,x)=t^{p^n}-\sum_{i=0}^{n-1}\v_i(x)t^{p^i}.
\end{equation}
On the other hand,
according to Theorem 2 in \cite{P1}, there exist homogeneous polynomials
$\psi_1,\cdots,\psi_{n-1}\in k[S_n]$ such that
\begin{equation}
x^{p^n}-\sum\limits_{i=1}^{n-1}\psi_i(x)x^{p^{i}}=0,
\qquad \forall x \in S_n\,.
\end{equation}
Let
\begin{equation}
\wt Q(t,x)=t^{p^n}-\sum\limits_{i=1}^{n-1}\psi_i(x)t^{p^{i}}.
\end{equation}
Since both $Q(x,t)$ and $\wt Q(t,x)$ are minimal $p$-polynomial on $S_n$, we readily obtain:
\begin{prop}[{\cite[Proposition 2.4]{WCL}}]
The polynomial $\v_0$ is zero on $S_n$ and $\v_i=\psi_i$ for $i=1,\cdots,n-1$.
\end{prop}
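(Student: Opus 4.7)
The plan is to prove the two assertions in order, first $\v_0\equiv 0$ on $S_n$ and then $\v_i=\psi_i$ for $i\geq 1$, using Cayley--Hamilton on $B_n$ together with the divergence-free condition.

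For $\v_0\equiv 0$, I would first recognise $\v_0(x)$ as a determinant. Because every derivation kills constants, the line $k\subseteq B_n$ is $\rho(x)$-stable with eigenvalue zero, so
\[
P(t,x)=t\cdot\det\bigl(tI-\bar\rho(x)\bigr),
\]
where $\bar\rho(x)$ is the operator induced on $B_n/k$. Comparing the coefficient of $t$ in this factorisation with the $p$-polynomial expansion of $P(t,x)$ and noting that $(-1)^{p^n-1}=1$ for odd $p$ yields $\phi_0(x)=-\det\bar\rho(x)$, valid on all of $W_n$. To see that this determinant vanishes on $\wt S_n$ I would employ the Frobenius linear form $\lambda\colon B_n\to k$ that extracts the coefficient of the top monomial $x_1^{p-1}\cdots x_n^{p-1}$. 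Since $D_i$ strictly lowers the $x_i$-degree, $\lambda(D_i g)=0$ for every $g\in B_n$, and the Leibniz identity $f_iD_ih=D_i(f_ih)-D_i(f_i)h$ yields the integration-by-parts formula
\[
\lambda\bigl(x(h)\bigr)=-\lambda\bigl(\div(x)\cdot h\bigr)
\qquad
\text{for } x=\sum f_iD_i\in W_n,\ h\in B_n.
\]
For $x\in\wt S_n$ the right-hand side vanishes, so $\rho(x)(B_n)\subseteq\ker\lambda$; since $k\subseteq\ker\lambda$ as well, the image of $\bar\rho(x)$ lies in the proper hyperplane $\ker\lambda/k\subsetneq B_n/k$ and $\det\bar\rho(x)=0$. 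This gives $\v_0\equiv 0$ on $S_n\subseteq\wt S_n$.

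For $\v_i=\psi_i$, subtracting the given identity $x^{p^n}=\sum_{i=1}^{n-1}\psi_i(x)x^{p^i}$ from the Cayley--Hamilton identity $x^{p^n}=\sum_{i=0}^{n-1}\v_i(x)x^{p^i}$, and using $\v_0\equiv 0$, gives
\[
\sum_{i=1}^{n-1}\bigl(\v_i(x)-\psi_i(x)\bigr)x^{p^i}=0
\qquad\text{for all } x\in S_n.
\]
I would test this on a generic element $x=\sum_{j=1}^n\mu_jx_jD_j$ of the Cartan subalgebra $\{\sum\mu_jx_jD_j\mid\sum\mu_j=0\}\subseteq S_n$, chosen so that $\mu_1,\dots,\mu_{n-1}$ are $\mathbb F_p$-linearly independent. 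Since $x^{p^i}=\sum_j\mu_j^{p^i}x_jD_j$, the relation becomes the linear system $\sum_{i=1}^{n-1}(\v_i-\psi_i)(x)\mu_j^{p^i}=0$ for $j=1,\dots,n$; the identity $\sum_j\mu_j^{p^i}=(\sum_j\mu_j)^{p^i}=0$ makes one equation redundant, and the remaining $(n-1)\times(n-1)$ matrix $(\mu_j^{p^i})_{1\le j,i\le n-1}$ is a shifted Moore matrix of $\mu_1^p,\dots,\mu_{n-1}^p$, whose determinant is nonzero by our choice of the $\mu_j$. Hence $(\v_i-\psi_i)(x)=0$ for every such $x$. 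Since $\v_i$ and $\psi_i$ are both $\wt G$-invariant polynomials on $S_n$ and the $\wt G$-orbit of the regular locus of this Cartan is Zariski-dense in $S_n$, the equality extends to all of $S_n$.

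The main obstacle is the first step: identifying $\v_0(x)$ with $-\det\bar\rho(x)$ and using the divergence-free condition through the Frobenius pairing on $B_n$ to force this determinant to vanish. Once $\v_0\equiv 0$ is established, the second step reduces to a Moore determinant computation in the Cartan together with a standard density extension for $\wt G$-invariants.
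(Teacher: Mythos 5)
Your first step is correct and is genuinely different from what the paper does. The paper obtains the proposition by observing that $Q(t,x)$ and $\wt Q(t,x)$ are both monic $p$-polynomials of degree $p^n$ annihilating every $x\in S_n$ and comparing them with the minimal $p$-polynomial (citing \cite[Proposition 2.4]{WCL} for the details), whereas you prove $\v_0\equiv 0$ directly: the identification $\phi_0(x)=-\det\bar\rho(x)$ on $B_n/k$, together with the integration-by-parts identity $\lambda(x(h))=-\lambda(\div(x)h)$ for the top-coefficient functional $\lambda$, correctly forces $\im\rho(x)\subseteq\ker\lambda$ and hence $\det\bar\rho(x)=0$ for divergence-free $x$. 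This is a clean, self-contained argument that the paper does not give. Your Moore-matrix computation on the torus in the second step is also correct: for $x=\sum\mu_jx_jD_j$ with $\sum\mu_j=0$ and $\mu_1,\dots,\mu_{n-1}$ linearly independent over $\mathbb{F}_p$, the system $\sum_{i=1}^{n-1}(\v_i-\psi_i)(x)\mu_j^{p^i}=0$ does force $(\v_i-\psi_i)(x)=0$.

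The gap is your final globalization. The claim that the $\wt G$-orbit of the regular locus of this Cartan subalgebra is Zariski-dense in $S_n$ is false: every automorphism of $B_n$ preserves the maximal ideal $\m$, so $\wt G$ preserves the standard filtration of $S_n$, and since $T_n\subseteq (S_n)_0$ we get $\wt G\cdot T_n\subseteq (S_n)_0$, a proper subspace of $S_n$ (indeed $D_1\in S_n\setminus (S_n)_0$). This is the same phenomenon the paper exploits in the corollary to Proposition \ref{pro-2}, and it reflects the fact that $S_n$ has $n$ conjugacy classes of maximal tori, so regular semisimple elements are not all conjugate into the standard torus. Invariance of $\v_i$ and $\psi_i$ therefore does not let you propagate the identity off $\overline{\wt G\cdot T_n}$. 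The repair is easy and does not need invariance at all: your computation exhibits a point of $S_n$ at which $x^{p},x^{p^2},\dots,x^{p^{n-1}}$ are linearly independent; this condition is Zariski-open, hence holds on a dense open subset $U$ of the affine space $S_n$, and on $U$ the relation $\sum_{i=1}^{n-1}\bigl(\v_i(x)-\psi_i(x)\bigr)x^{p^i}=0$ forces each coefficient to vanish. Since $\v_i-\psi_i$ is a polynomial vanishing on $U$, it vanishes identically. With that replacement your proof is complete.
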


In \cite{WCL}, the authors investigated the nilpotent elements of $S_n$ and the nilpotent variety $\N(S_n)$ is proved to be an irreducible
complete intersection of codimension $n-1$. Furthermore, by modifying a result in \cite{BFS}, the authors establish an analogue of the Chevalley Restriction Theorem:
$$k[S_n]^{\wt G}\cong k[T]^{{\rm GL}_{n-1}(\mathbb{F}_{p})}\cong k[W_{n-1}]^{G'} ,$$
and $k[S_n]^{\wt G}=k[\v_{1}^{1/p},\cdots,\v_{n-1}^{1/p}]$, where $T$ is the standard generic torus of $S_n$ defined in \cite{BFS} and $G'$ is the restricted automorphism group of $W_{n-1}$.

Similar to section \ref{propertiesofvg}, we define the adjoint quotient $\Phi_{\wt G}$ for the special algebra $S_n$.
Choose generators $\v_{1}^{1/p},\v_{2}^{1/p},\cdots,\v_{n-1}^{1/p}\in k[S_n]^{\wt G}$ for $k[S_n]^{\wt G}$ as a $k$-algebra. Set
$$\Phi_{\wt G}:S_n\longrightarrow \mathbb{A}^{n-1},~~~~~~~~~~~x\mapsto (\v_{1}^{1/p}(x),\v_{2}^{1/p}(x),\cdots,\v_{n-1}^{1/p}(x)).$$
Recall that the following useful morphism:
$$\sigma:~W_{n-1}\longrightarrow S_n,~~~~~~~~~~~~~~~~~x\mapsto x-\div(x)x_nD_n.$$
$\sigma$ is an injective homomorphism of restricted Lie algebras (\cite[Lemma 4.1]{BFS}).

Now we are ready to describe the properties of morphism $\Phi_{\wt G}$.
\begin{lem}\label{commutativeddiagram}
The following diagram is commutative.
\begin{center} \mbox{ }
\xymatrix{
      W_{n-1}\ar@{->}[d]^-{\sigma}  \ar@{->}[r]^-{\Phi_{G'}}
      &   \mathbb{A}^{n-1}  \ar@{->}[d]^-{{\rm Id}}\\
       ~~~~S_n \ar@{->}[r]^{\Phi_{\wt G}}
      &    \mathbb{A}^{n-1}  }
\end{center}
In particular, $\Phi_{\wt G}$ is surjective.
\end{lem}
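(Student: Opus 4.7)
The plan is to verify that the two maps $W_{n-1}\to \mathbb{A}^{n-1}$ agree pointwise, i.e.\ $\Phi_{\wt G}(\sigma(y))=\Phi_{G'}(y)$ for every $y\in W_{n-1}$. Write $\phi_0,\ldots,\phi_{n-2}$ for the generators of $k[W_{n-1}]^{G'}$ produced by the $W_{n-1}$-analogue of (\ref{characteristicpolynomialofWn}); these are precisely the coordinate functions of $\Phi_{G'}$. It then suffices to prove the polynomial identity
$$\v_j(\sigma(y))=\phi_{j-1}(y)^p,\qquad y\in W_{n-1},\ j=1,\ldots,n-1,$$
because extracting $p$-th roots recovers $\v_j^{1/p}(\sigma(y))=\phi_{j-1}(y)$. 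As both sides are regular functions of $y$, it is enough to verify the identity on a single Zariski-dense open subset of $W_{n-1}$.

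The key ingredient is that $\sigma$ is a homomorphism of restricted Lie algebras, so it commutes with every iterated $p$-map. Starting from the Cayley-Hamilton-type identity $y^{p^{n-1}}=\sum_{i=0}^{n-2}\phi_i(y)\,y^{p^i}$ valid on $W_{n-1}$, the mutually commuting nature of the powers $y^{p^i}$ allows one to raise both sides to the $p$-th power and then apply $\sigma$, yielding
$$\sigma(y)^{p^n}=\sum_{j=1}^{n-1}\phi_{j-1}(y)^p\,\sigma(y)^{p^j}.$$
On the other hand, the identity valid on $S_n$ (together with $\v_0|_{S_n}=0$) gives $\sigma(y)^{p^n}=\sum_{j=1}^{n-1}\v_j(\sigma(y))\,\sigma(y)^{p^j}$. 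Subtracting, one arrives at
$$\sum_{j=1}^{n-1}\bigl(\v_j(\sigma(y))-\phi_{j-1}(y)^p\bigr)\,\sigma(y)^{p^j}=0.$$

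The delicate step is to conclude that each coefficient vanishes. Applying $\sigma$ directly to the $W_{n-1}$-identity also gives $\sigma(y)^{p^{n-1}}=\sum_{i=0}^{n-2}\phi_i(y)\,\sigma(y)^{p^i}$; substituting this expression for the $j=n-1$ term and rearranging reduces the displayed relation to a linear combination of $\sigma(y),\sigma(y)^p,\ldots,\sigma(y)^{p^{n-2}}$. I expect the main obstacle to be the choice of a dense open subset of $y\in W_{n-1}$ on which these $n-1$ elements are linearly independent and on which $\phi_0(y)\neq 0$: one invokes the $W_{n-1}$-analogue of Proposition \ref{pro-1} (so that $B_{n-1}^y=k$ forces $y,y^p,\ldots,y^{p^{n-2}}$ to be linearly independent in $W_{n-1}$) and uses the injectivity of $\sigma$ to transport this independence to $S_n$. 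Once the coefficients are forced to vanish on such an open set they vanish identically, proving commutativity. The surjectivity of $\Phi_{\wt G}$ is then immediate: Premet's theorem applied to $W_{n-1}$ gives $\Phi_{G'}$ surjective, and the commuting diagram forces $\Phi_{\wt G}(\sigma(W_{n-1}))=\Phi_{G'}(W_{n-1})=\mathbb{A}^{n-1}$.
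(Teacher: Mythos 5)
Your proposal is correct and follows essentially the same route as the paper: both start from the identity $y^{p^{n-1}}=\sum_{i=0}^{n-2}\phi'_i(y)y^{p^i}$ on $W_{n-1}$ and the identity $\sigma(y)^{p^n}=\sum_{j=1}^{n-1}\v_j(\sigma(y))\sigma(y)^{p^j}$ on $S_n$, use that $\sigma$ is a homomorphism of restricted Lie algebras, and deduce $\v_j\circ\sigma=(\phi'_{j-1})^p$ by comparing the two $p$-polynomial relations. The only divergence is in the final coefficient-matching step, where the paper appeals tersely to the root structure of $p$-polynomials while you substitute the relation for $\sigma(y)^{p^{n-1}}$ and use linear independence of $\sigma(y),\ldots,\sigma(y)^{p^{n-2}}$ together with $\phi'_0(y)\neq 0$ on a dense open subset --- a more explicit but equivalent justification.
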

\begin{proof}
As (\ref{characteristicpolynomialofWn}),
we denote by
$$P'(t,x)=t^{p^{n-1}}-\sum_{i=0}^{n-2}\phi^{'}_i(x)t^{p^i}$$
the characteristic polynomial for $W_{n-1}$.
Hence, for any $x\in W_{n-1}$,
\begin{equation}\label{1}
x^{p^{n-1}}-\sum_{i=0}^{n-2}\phi^{'}_i(x)x^{p^i}=0.
\end{equation}
On the other hand, $Q(t,x)$ is the characteristic polynomial of $S_n$. It follows that
\begin{equation}\label{2}
(\sigma(x))^{p^n}-\sum_{i=1}^{n-1}\v_i(\sigma(x))(\sigma(x))^{p^i}=((\sigma(x))^{p^{n-1}}-\sum_{i=1}^{n-1}\v_i^{1/p}(\sigma(x))(\sigma(x))^{p^{i-1}})^p=0
\end{equation}
for any $x\in W_{n-1}$.
Since both $P'(t,x)$ and $Q(t,x)$ are $p$-polynomial, and therefore all its roots have same multiplicity which is a power of $p$, it follows by
a comparison of (\ref{1}) and (\ref{2}) that $\phi^{'}_i(x)=\v_{i+1}^{1/p}(\sigma(x))$ for any $x\in S_n$. Thus, $\phi^{'}_i=\v_{i+1}^{1/p}\circ\sigma$.
The surjectivity of $\Phi_{G'}$ implies that $\Phi_{\wt G}$ is surjective.
\end{proof}
We still denote by $P_{\e}=\Phi_{\wt G}^{-1}(\e)$ the fiber of $\Phi_{\wt G}$,
where $\e=(\e_{1},\cdots,\e_{n-1})\in\mathbb{A}^{n-1}$.

Let
$$\D=D_1+x_1^{p-1}D_2+\cdots+x_1^{p-1}\cdots x_{n-2}^{p-1}D_{n-1}\,.$$
Considering $\D$ as a linear transformation
on $B_n$, we can easily verify that

$$\ker \D=K \triangleq \left\{a_0+a_1x_n+\cdots+a_{p-1}x_n^{p-1}\,\big|\,
a_0,a_1,\cdots,a_{p-1} \in k\right\} \subseteq B_n;$$
$$
\im \D =k\text{-subspace spanned by~} x_1^{a_1}\cdots x_n^{a_n}
\text{~with~} (a_1,\cdots,a_{n-1}) \neq (p-1,\cdots,p-1).
$$
Take $\e=(\e_{1},\cdots,\e_{n-1})\in\mathbb{A}^{n-1}$, let
$$\D_{\e}=\D+x_1^{p-1}\cdots x_{n-1}^{p-1}\sum\limits_{i=1}^{n-1}(-1)^{n-i}\e_iD_i\in W_{n-1}.$$
Note that, for $\e=0=(0,\cdots,0)\in\mathbb{A}^{n-1}$, $\D_{0}=\D$.

For arbitrary $f_1,\cdots,f_{n-1} \in K $,
denote by

$$\Omega_{f_1,f_2,\cdots,f_{n-1}}^{\e}=\sigma(\D_{\e})+x_1^{p-1}\cdots x_{n-1}^{p-1}\big(D_n(f_1)D_1+\cdots+D_n(f_{n-1})D_{n-1}\big)  \label{Omegaf_i}$$
$$+x_1^{p-2}\cdots x_{n-1}^{p-2}\big(f_1x_2\cdots x_{n-1}+
x_1f_2x_3\cdots x_{n-1}+\cdots+x_1x_2\cdots x_{n-2}f_{n-1}\big)D_n.$$

Set
\begin{eqnarray}
&\Omega^{\e}=&\Big\{\Omega_{f_1,f_2,\cdots,f_{n-1}}^{\e}
\,\big|\, f_i \in K, \deg f_i \geq 2, \forall 1\leq i\leq n-1\Big\}.\label{Omega}
\end{eqnarray}
It is easy to see that $\Omega^{\e}$ is an irreducible subset of $S_n$ and $\dim_k \Omega^{\e}=(n-1)(p-2)$.

\begin{lem}\label{PhiG'}
$\Phi_{G'}(\D_{\e})=\e$.
\end{lem}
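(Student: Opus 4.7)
The plan is to identify the characteristic polynomial $P'(t,\D_\e)$ of the tautological representation $\rho(\D_\e)$ on $B_{n-1}$; by definition of $\Phi_{G'}$, the components $\phi^{'}_i(\D_\e)$ for $0\le i\le n-2$ can then be read off from its coefficients.

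I would first describe the matrix of $\rho(\D_\e)$ in the monomial basis of $B_{n-1}$, indexed by the base-$p$ value of the exponent vector: writing $e_N=x_1^{a_1}\cdots x_{n-1}^{a_{n-1}}$ for $N=\sum_k a_k p^{k-1}\in\{0,\ldots,p^{n-1}-1\}$, a direct computation from the explicit formula for $\D_\e$ gives
\[
\D_\e(e_N)=\begin{cases}c_N\,e_{N-1},& N\notin\{1,p,\ldots,p^{n-2}\},\\[2pt] e_{p^{i-1}-1}+(-1)^{n-i}\e_i\,e_{p^{n-1}-1},& N=p^{i-1},\ 1\le i\le n-1,\end{cases}
\]
where $c_N$ denotes the smallest nonzero base-$p$ digit of $N$ (so $c_{p^{i-1}}=1$). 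The matrix $M$ of $\rho(\D_\e)$ in this basis thus has entries $c_N$ on the superdiagonal, entries $(-1)^{n-i}\e_i$ at positions $(p^{n-1}-1,p^{i-1})$ in the last row, and zeros elsewhere.

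Next I would pass to the rescaled basis $\tilde e_N=e_N/(c_1c_2\cdots c_N)$, in which the superdiagonal of $M$ becomes $1$'s and the last-row perturbations become $\tilde\e_i:=(-1)^{n-i}\e_i A_i$ where $A_i:=\prod_{k=p^{i-1}+1}^{p^{n-1}-1}c_k$. The transpose of this rescaled matrix is precisely the standard companion matrix of $t^{p^{n-1}}-\sum_{i=1}^{n-1}\tilde\e_i\,t^{p^{i-1}}$, so this polynomial equals $P'(t,\D_\e)$.

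The principal technical obstacle is the combinatorial identity $A_i\equiv(-1)^{n-i}\pmod p$, which collapses $\tilde\e_i$ to $\e_i$. I would deduce it from the cleaner congruence $\prod_{k=1}^{p^m-1}c_k\equiv(-1)^m\pmod p$, proved by induction on $m$: the base case $m=1$ is Wilson's theorem $(p-1)!\equiv-1\pmod p$, and the induction step uses the block identity $\prod_{k=jp^{m-1}}^{(j+1)p^{m-1}-1}c_k\equiv j\cdot\prod_{k=1}^{p^{m-1}-1}c_k\pmod p$ for $j=1,\ldots,p-1$ together with Wilson. Since $c_{p^{i-1}}=1$, dividing yields $A_i\equiv(-1)^{n-1}/(-1)^{i-1}\equiv(-1)^{n-i}$, so $\tilde\e_i=\e_i$ and $P'(t,\D_\e)=t^{p^{n-1}}-\sum_{i=1}^{n-1}\e_i\,t^{p^{i-1}}$. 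Therefore $\phi^{'}_{i-1}(\D_\e)=\e_i$ for $1\le i\le n-1$, i.e., $\Phi_{G'}(\D_\e)=\e$.
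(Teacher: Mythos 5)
Your proof is correct, but it takes a genuinely different route from the paper's. The paper works inside the Lie algebra $W_{n-1}$: by Jacobson's formula $\D_{\e}^{p^i}\equiv(-1)^iD_{i+1}$ modulo $\m W_{n-1}$, so the powers $\D_{\e}^{p^0},\dots,\D_{\e}^{p^{n-2}}$ form a $B_{n-1}$-basis of $W_{n-1}$; writing $\D_{\e}^{p^{n-1}}=\sum f_i\D_{\e}^{p^i}$ and applying $\ad\D_{\e}$ forces the $f_i$ to be constants, an induction borrowed from \cite[Lemma 4.1, 4.2]{WCL} identifies $f_i=\e_{i+1}$, and finally $B_{n-1}^{\D_{\e}}=k$ is invoked to conclude that the minimal $p$-polynomial so obtained is the characteristic polynomial, whence $\phi_i'(\D_{\e})=\e_{i+1}$. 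You instead compute the characteristic polynomial of $\rho(\D_{\e})$ on $B_{n-1}$ head-on: your matrix description is right (the term $x_1^{p-1}\cdots x_{j-1}^{p-1}D_j$ kills every monomial except those with $a_1=\cdots=a_{j-1}=0$, $a_j\geq 1$, producing exactly the base-$p$ ``borrow'' $e_N\mapsto c_Ne_{N-1}$, and the perturbation term only sees $e_{p^{i-1}}=x_i$), the rescaled matrix is indeed the transpose of the companion matrix of $t^{p^{n-1}}-\sum_i\tilde\e_it^{p^{i-1}}$, and your Wilson-type congruence $\prod_{k=1}^{p^m-1}c_k\equiv(-1)^m$ (with the block identity and $(-1)^{(m-1)p+1}=(-1)^m$ for $p$ odd) correctly yields $A_i\equiv(-1)^{n-i}$, so $\tilde\e_i=\e_i$. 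What your approach buys is self-containedness: you never need the minimal-polynomial-equals-characteristic-polynomial step, the freeness of $W_{n-1}$ over $B_{n-1}$, or the external induction from \cite{WCL}; the cost is the digit combinatorics, and the sign that the paper extracts from Jacobson's formula reappears in your argument as the Wilson congruence. One small presentational point: you should state explicitly that the coefficient comparison uses the definition $P'(t,x)=t^{p^{n-1}}-\sum_{i=0}^{n-2}\phi_i'(x)t^{p^i}$ from \cite[Corollary 1]{Pr-1}, so that reading off $\phi_{i-1}'(\D_{\e})=\e_i$ is immediate.
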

\begin{proof}
Let $\D'=x_1^{p-1}\cdots x_{n-1}^{p-1}\sum\limits_{i=1}^{n-1}(-1)^{n-i}\e_iD_i$, $\D_{\e}=\D+\D'$. Note that $\D'$ in the highest graded space.
By Jacobson's formula
$$\D_{\e}^{p^i}\equiv(-1)^{i}D_{i+1}~~ ({\rm mod} \mathfrak{m}W_{n-1}).$$
for all $i=0,\cdots,n-2$.
Since $W_{n-1}$ is a free module $B_{n-1}$-module with a basis $D_1,\cdots,D_{n-1}$.
The above argument implies $\D_{\e}^{p^i}$ with $i=0,1,\cdots n-2,$ form a basis for $W_{n-1}$ over $B_{n-1}$.
Hence we can write
$$\D_{\e}^{p^{n-1}}=\sum_{i=0}^{n-2}f_i\D_{\e}^{p^i} {\rm with}~~f_i\in B_{n-1}.$$
Since $\D_{\e}$ centralizers all power of $\D_{\e}$, we have
$$\sum_{i=0}^{n-2}\D_{\e}(f_i)\D_{\e}^{p^i}=0.$$
Note that $\D_{\e}^{p^i}$ is a basis for $W_{n-1}$ over $B_{n-1}$.
Hence $f_{0},\cdots,f_{n-2}$ are annihilated by all derivations of $B_{n-1}$, this implies that $f_{i}\in k$.
Furthermore, use the same induction argument in \cite[Lemma 4.1,4.2]{WCL}, $$\D_{\e}^{p^{n-1}}\equiv (-1)^{n-1}\sum\limits_{i=1}^{n-1}(-1)^{n-i}\e_iD_i~~({\rm mod} \mathfrak{m}W_{n-1}).$$
Hence, $f_i=\e_{i+1}$ for all $i=0,\cdots,n-2$.
Furthermore, observe that the $B_{n-1}^{\D_{\e}}=k$, this implies that the minimal polynomial of $\D_{\e}$ is of degree $p^{n-1}$.
Hence, the characteristic polynomial is just the minimal polynomial, so $\phi_{i}^{'}(\D_{\e})=f_{i}=\e_{i+1}$ for all $i=0,\cdots,n-2$, as desired.
\end{proof}
\begin{thm}\label{fiberbukeyue}
The morphism $\Phi_{\wt G}$ is equidimensional. For every $\e\in\mathbb{A}^{n-1}$ the fiber $P_{\e}$ is irreducible of codimension $n-1$ in $S_n$,
and contains an affine subspace $\Omega^{\e}$ such that $P_{\e}=\overline{\wt G.\Omega^{\e}}$. The ideal
$I(P_{\e})=\{f\in k[S_n]~\mid~f(P_{\e})=0 \}$ is generated by polynomial $\v_{1}^{1/p}-\e_{1},\v_{2}^{1/p}-\e_2,\cdots,\v_{n-1}^{1/p}-\e_{n-1}$.
In particular, $P_{\e}$ is a complete intersection.
\end{thm}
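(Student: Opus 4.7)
The plan is to follow the blueprint of Premet's Theorem~3 in \cite{Pr-1}, adapted to the special algebra via the embedding $\sigma:W_{n-1}\hookrightarrow S_n$ and the commutative diagram of Lemma~\ref{commutativeddiagram}. The proof breaks into three stages: locate the slice $\Omega^{\e}$ inside the fibre $P_{\e}$, compute the dimension of its $\wt G$-saturation, and then invoke the Cohen--Macaulay property of $k[S_n]$ to deduce the complete intersection, irreducibility, and the ideal statement.

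\textbf{Stage 1 (the slice lies in $P_{\e}$).} Combining Lemma~\ref{commutativeddiagram} with Lemma~\ref{PhiG'} yields $\Phi_{\wt G}(\sigma(\D_{\e}))=\Phi_{G'}(\D_{\e})=\e$, so $\sigma(\D_{\e})\in P_{\e}$. For the larger family I would compute the characteristic polynomial $Q(t,y)$ at an arbitrary $y=\Omega^{\e}_{f_1,\ldots,f_{n-1}}$ by iterating Jacobson's $p$-power formula. The additional terms added to $\sigma(\D_{\e})$ are built from elements of $K=\ker\D$ of degree $\geq 2$ multiplied by the top monomials $x_1^{p-1}\cdots x_{n-1}^{p-1}$ or $x_1^{p-2}\cdots x_{n-1}^{p-2}$, so they sit in the deepest filtration layer; modulo that layer the iterated $p$-th powers $y^{p^i}$ coincide with $\sigma(\D_{\e})^{p^i}$, and an induction parallel to Lemma~\ref{PhiG'} and \cite[Lemmas 4.1, 4.2]{WCL} should give $\v_i^{1/p}(y)=\e_i$ for all $i$, whence $\Omega^{\e}\subseteq P_{\e}$.

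\textbf{Stage 2 (dimension count).} From the above, $\overline{\wt G\cdot\Omega^{\e}}\subseteq P_{\e}$. Since $\Phi_{\wt G}$ is surjective onto $\mathbb{A}^{n-1}$, every fibre has dimension at least $N-(n-1)$. To match this bound from the other side I would estimate $\dim\overline{\wt G\cdot\Omega^{\e}}$: Premet's theorem for $W_{n-1}$ ensures the stabiliser of $\D_{\e}$ in $G'$ has dimension $n-1$; transferring through $\sigma$ and using $\dim\wt G=N+1$, the stabiliser of a generic $y\in\Omega^{\e}$ in $\wt G$ should have dimension at most $n$, and combined with $\dim_k\Omega^{\e}=(n-1)(p-2)$ this forces $\dim\overline{\wt G\cdot\Omega^{\e}}=N-(n-1)$. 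Thus $\overline{\wt G\cdot\Omega^{\e}}$ is an irreducible component of $P_{\e}$ of the largest possible dimension.

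\textbf{Stage 3 (complete intersection, irreducibility, and the ideal).} By \cite{WCL}, the common zero locus $V(\v_1^{1/p},\ldots,\v_{n-1}^{1/p})=\N(S_n)$ has codimension exactly $n-1$, so these homogeneous polynomials form a regular sequence in the Cohen--Macaulay ring $k[S_n]$; translating by constants, $\v_1^{1/p}-\e_1,\ldots,\v_{n-1}^{1/p}-\e_{n-1}$ is also a regular sequence. The scheme defined by $J=(\v_i^{1/p}-\e_i)$ is therefore a pure-dimensional Cohen--Macaulay complete intersection of dimension $N-(n-1)$, giving equidimensionality of $\Phi_{\wt G}$ at once. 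To conclude $P_{\e}=\overline{\wt G\cdot\Omega^{\e}}$ (irreducibility) and $I(P_{\e})=J$ (reducedness), I would show that every irreducible component of $\mathrm{Spec}\,k[S_n]/J$ meets $\Omega^{\e}$ through a torus-degeneration argument modelled on \cite[Theorem~3]{Pr-1}, using the classification of maximal tori of $S_n$ from \cite{D} and compatibility with $\sigma$. Reducedness then follows by exhibiting a smooth point: for $\e$ generic, Proposition~\ref{pro-1} applied to the regular element $\D_{\e}\in W_{n-1}$ transported by $\sigma$ shows that the differentials $d(\v_i^{1/p})$ are linearly independent there. The main obstacle I anticipate is precisely this last step, i.e.\ ruling out spurious irreducible components of the scheme-theoretic fibre of the correct dimension; Cohen--Macaulayness alone only yields equidimensionality, and the full strength of the $\wt G$-action together with its maximal tori will be needed to conclude that $\overline{\wt G\cdot\Omega^{\e}}$ exhausts $P_{\e}$.
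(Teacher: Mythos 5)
Your overall architecture (slice inside the fibre, dimension count via the $\wt G$-saturation, Cohen--Macaulay/regular-sequence argument for the complete intersection and the ideal, deferring the hardest uniqueness-of-component step to the method of \cite[Lemma 13]{Pr-1}) is exactly the paper's blueprint, and Stages 2 and 3 land where the paper lands. The one place you genuinely diverge is the inclusion $\Omega^{\e}\subseteq P_{\e}$ for a \emph{general} element $\Omega^{\e}_{f_1,\dots,f_{n-1}}$, and there your route is both harder and, as sketched, not yet a proof. You propose to iterate Jacobson's formula and argue that the perturbing terms lie so deep in the filtration that $y^{p^i}\equiv\sigma(\D_{\e})^{p^i}$ modulo the top layer; but the invariants $\v_i^{1/p}(y)$ are the coefficients in the relation $y^{p^{n-1}}=\sum_i \v_{i+1}^{1/p}(y)\,y^{p^i}$, and a congruence of the powers modulo the top graded piece does not by itself pin down those coefficients --- you would have to redo the full inductive bookkeeping of \cite[Lemmas 4.1, 4.2]{WCL} with the $\e$-terms present. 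The paper sidesteps this entirely with a one-line degeneration: the automorphism $x_n\mapsto ax_n$ (which lies in $\wt G$ by (\ref{aut1})) contracts $\Omega^{\e}_{f_1,\dots,f_{n-1}}$ to $\sigma(\D_{\e})$ as $a\to 0$, and since the $\v_i^{1/p}$ are $\wt G$-invariant and continuous they are constant on orbit closures, so $\Phi_{\wt G}(y)=\Phi_{\wt G}(\sigma(\D_{\e}))=\e$ with no computation at $y$ at all. You should adopt that trick; it is the same one you already use implicitly when you say $\sigma(\D_{\e})\in P_{\e}$ suffices ``generically.''

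Two smaller points. In Stage 2 your bookkeeping is off as written: adding $\dim_k\Omega^{\e}$ to an orbit-dimension estimate without subtracting the generic fibre dimension of $\mu:\wt G\times\Omega^{\e}\to P_{\e}$ would overshoot $N-(n-1)$. Either quote the fibre dimension $(n-1)(p-1)+1$ of $\mu$ from \cite[Remark 5.3]{WCL}, as the paper does, or observe that a single orbit already suffices for the lower bound: if the stabiliser of $\sigma(\D_{\e})$ has dimension at most $n$ then $\dim\wt G\cdot\sigma(\D_{\e})\geq (N+1)-n=N-(n-1)$, which together with irreducibility of $P_{\e}$ forces $P_{\e}=\overline{\wt G\cdot\Omega^{\e}}$ --- but then the stabiliser bound must actually be proved (the paper's route via \cite{WCL} avoids this). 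Finally, your worry in Stage 3 about spurious components of the scheme-theoretic fibre is the right one to have; the paper offers no new idea there and simply transplants Premet's argument, so your plan to do the same is appropriate, though as stated it remains a plan rather than a proof.
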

\begin{proof}
The proof of irreducibility and complete intersection can be applied the same argument as in \cite[Lemma 13]{Pr-1}.

Now we want to prove that $\Omega^{\e}\subseteq P_{\e}$. Let $x=\Omega_{f_1,f_2,\cdots,f_{n-1}}^{\e}\in\Omega^{\e}$.
For any $0\neq a\in k$, let $\tau\in \Aut(B_n)$ be defined by
$$\tau(x_1)=x_1,\cdots,\tau(x_{n-1})=x_{n-1},\tau(x_n)=ax_n,$$
and let $g\in G$ be the corresponding automorphism of $W_n$. According to (\ref{aut1}), $g\in \wt G=\Aut(S_n)$.
By taking limit as $a$ goes to $0$, we have $\sigma(\D_{\e})\in \overline{\wt G.x}$. It follows from Lemma \ref{commutativeddiagram} and
Lemma \ref{PhiG'} that
$$\Phi_{\wt G}(x)=\Phi_{\wt G}(\sigma(\D_{\e}))=\Phi_{G'}(\D_{\e})=\e.$$
This implies the following well-defined morphism,
$$\mu: \wt G\times\Omega^{\e}\rightarrow P_{\e},~~~~(g,x)\mapsto g(x).$$
According to \cite[Remark 5.3]{WCL}, the dimension of the fiber of $\mu$ is just $(n-1)(p-1)+1$.
Hence, $$\dim \im(\mu)=\dim \wt G+\dim \Omega^{\e}-(n-1)(p-1)+1=N-(n-1).$$
Thus $P_{\e}=\overline{\wt G.\Omega^{\e}}$.
\end{proof}
\begin{rem}
In section \ref{propertiesofvg}, the adjoint quotient map $\Phi_{G}$ for Jacobson-Witt algebra $W_n$ is flat morphism since $k[\g]$ is a
free module over $k[\g]^G$ (\cite[Corollary 3]{Pr-1}). In the case of special algebra $S_n$, the adjoint quotient map $\Phi_{\wt G}$ is still a flat morphism (cf.\cite{N}).
\end{rem}

\subsection{transverse slices}
In the classical setting \cite{Ko}, Kontant defined the transversal plane to a principal nilpotent element (see also \cite{Jan} for details).
In this section, we want to construct the analogs for $W_n$ and $S_n$ (see \cite{Sk} for the case of Possion algebra).
Recall that
$$\D_{\e}=\D+x_1^{p-1}\cdots x_{n-1}^{p-1}\sum\limits_{i=1}^{n-1}(-1)^{n-i}\e_iD_i.$$
Let
$$\Omega_{1}^{'}=\{\D_{\e}\mid \e\in\mathbb{A}^{n-1}\}\subseteq W_{n-1}$$
and
$$\Omega_1=\sigma(\Omega_{1}^{'})=\{\sigma(\D_{\e})\mid \e\in\mathbb{A}^{n-1}\}\subseteq S_n.$$

Note that Lemma \ref{commutativeddiagram} and Lemma \ref{PhiG'} implies $\Phi_{\wt G}(\sigma(\D_{\e}))=\Phi_{G'}(\D_{\e})=\e$.
The following proposition is a stronger version of \cite[Lemma 5.5]{WCL}.
\begin{prop}\label{transverseslicetoSn}
The restriction map $k[S_n]^{\wt G}\rightarrow k[\Omega_1]$ is an isomorphism.
\end{prop}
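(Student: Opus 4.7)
The plan is to show that the restriction map sends a known polynomial generating set of $k[S_n]^{\wt G}$ bijectively to a polynomial generating set of $k[\Omega_1]$. The preceding Lemmas \ref{commutativeddiagram} and \ref{PhiG'} have already computed how the invariants $\v_i^{1/p}$ evaluate on $\Omega_1$, so the remaining work is essentially to package this into an isomorphism statement.

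First I would observe that the parametrization $\alpha\colon \mathbb{A}^{n-1}\to \Omega_1$, $\e\mapsto \sigma(\D_\e)$, is an isomorphism of affine varieties, since $\D_\e$ depends affine-linearly on $\e$ and $\sigma$ is a linear injection. Hence $\Omega_1$ is a closed affine subspace of $S_n$ of dimension $n-1$, and $k[\Omega_1]$ is itself a polynomial ring in $n-1$ variables, namely $\alpha^*(t_1),\ldots,\alpha^*(t_{n-1})$, where $t_1,\ldots,t_{n-1}$ are the standard coordinates on $\mathbb{A}^{n-1}$. Next, combining Lemmas \ref{commutativeddiagram} and \ref{PhiG'} gives $\v_i^{1/p}(\sigma(\D_\e))=\e_i$ for every $\e\in\mathbb{A}^{n-1}$ and every $i=1,\ldots,n-1$; equivalently, $\Phi_{\wt G}|_{\Omega_1}\circ\alpha=\mathrm{id}_{\mathbb{A}^{n-1}}$. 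Consequently, the restrictions $\v_i^{1/p}|_{\Omega_1}$ correspond, via $\alpha^*$, to the standard coordinate functions on $\mathbb{A}^{n-1}$, hence form a polynomial basis of $k[\Omega_1]$.

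Finally, since $k[S_n]^{\wt G}=k[\v_1^{1/p},\ldots,\v_{n-1}^{1/p}]$ is itself a polynomial ring on $n-1$ algebraically independent generators (recalled just before the proposition), the restriction map sends a polynomial basis to a polynomial basis and is therefore an isomorphism of $k$-algebras. There is no genuine obstacle: the whole content is packaged into the two previous lemmas, and the only additional observation to verify is that $\alpha$ is an isomorphism of varieties, which is immediate from the explicit affine-linear formula defining $\D_\e$.
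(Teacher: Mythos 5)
Your proof is correct and follows essentially the same route as the paper: both arguments rest on the computation $\v_i^{1/p}(\sigma(\D_{\e}))=\e_i$ from Lemmas \ref{commutativeddiagram} and \ref{PhiG'}, which identifies the restrictions $\v_i^{1/p}|_{\Omega_1}$ with coordinate functions on the affine space $\Omega_1$. The only (harmless) difference is that the paper quotes \cite[Lemma 5.5]{WCL} for injectivity and only argues surjectivity, whereas you obtain injectivity as well from the algebraic independence of the restricted generators, which makes the argument self-contained.
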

\begin{proof}
We only need to prove the surjectivity, since the map is injective (\cite[Lemma 5.5]{WCL}).
$\Omega_1$ is an affine subspace of $S_n$,
so $k[\Omega_1]$ is a polynomial ring in the $n-1$ coordinate functions $\D_{\e}\mapsto (-1)^{n-i}\e_i$. But
$\v_i^{1/p}|_{\Omega_1}$ are nonzero scalar multiples of these coordinate functions. It follows from
$k[S_n]^{\wt G}\cong k[\v_{1}^{1/p},\cdots,\v_{n-1}^{1/p}]$ that the restriction is surjective.
\end{proof}
Note that the isomorphism $k[S_n]^{\wt G}\cong k[W_{n-1}]^{G'}$ induced by $\sigma$, as a corollary, we readily obtain:
\begin{cor}
The restriction map $k[W_{n-1}]^{G'}\rightarrow k[\Omega_1^{'}]$ is an isomorphism.
\end{cor}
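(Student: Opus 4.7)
The plan is to deduce this corollary from Proposition \ref{transverseslicetoSn} by a short functoriality argument, using that $\sigma$ intertwines everything in sight. The key observation is that $\sigma \colon W_{n-1} \to S_n$ is linear and injective, so its restriction to the affine subspace $\Omega_1' \subseteq W_{n-1}$ is an isomorphism of affine varieties onto $\sigma(\Omega_1') = \Omega_1 \subseteq S_n$. Pulling back along $\sigma|_{\Omega_1'}$ therefore induces a ring isomorphism $\sigma^* \colon k[\Omega_1] \stackrel{\sim}{\longrightarrow} k[\Omega_1']$.

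Next, I would record the commutative square
\begin{equation*}
\begin{array}{ccc}
k[S_n]^{\wt G} & \longrightarrow & k[\Omega_1] \\
\sigma^*\,\Big\downarrow & & \Big\downarrow\,\sigma^* \\
k[W_{n-1}]^{G'} & \longrightarrow & k[\Omega_1']
\end{array}
\end{equation*}
in which the horizontal arrows are the restriction maps. Commutativity is tautological: both compositions send an invariant $f$ to $f\circ\sigma$ restricted to $\Omega_1'$. The left vertical arrow is an isomorphism; indeed, by the identity $\phi'_i = \v_{i+1}^{1/p}\circ\sigma$ established inside the proof of Lemma \ref{commutativeddiagram}, the map $\sigma^*$ carries the polynomial generators $\v_{1}^{1/p},\dots,\v_{n-1}^{1/p}$ of $k[S_n]^{\wt G}$ bijectively onto the polynomial generators $\phi'_0,\dots,\phi'_{n-2}$ of $k[W_{n-1}]^{G'}$. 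The right vertical arrow is an isomorphism by the preceding paragraph, and the top horizontal arrow is an isomorphism by Proposition \ref{transverseslicetoSn}. By the two-out-of-three property of isomorphisms in a commuting square, the bottom arrow is also an isomorphism.

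I do not expect any real obstacle: the only things to verify are that $\sigma|_{\Omega_1'}\colon \Omega_1' \to \Omega_1$ is an isomorphism of varieties (immediate from linearity and injectivity of $\sigma$ together with $\Omega_1 = \sigma(\Omega_1')$ by definition) and that the displayed square commutes (immediate from the definitions of the pullback maps). If one prefers a self-contained argument not invoking Proposition \ref{transverseslicetoSn}, an equally short direct proof is available: $\Omega_1'$ is an $(n-1)$-dimensional affine space with coordinate functions $\D_{\e}\mapsto (-1)^{n-i}\e_i$, and by Lemma \ref{PhiG'} each $\phi'_{i-1}|_{\Omega_1'}$ is precisely the $i$-th such coordinate function, so the restriction is surjective; injectivity then follows from the already-known inclusion $k[W_{n-1}]^{G'} = k[\phi'_0,\dots,\phi'_{n-2}]$ as a polynomial ring together with the fact that $\dim\Omega_1' = n-1$ forces these images to be algebraically independent.
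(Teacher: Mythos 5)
Your argument is correct and is essentially the paper's own: the paper derives this corollary directly from Proposition \ref{transverseslicetoSn} together with the isomorphism $k[S_n]^{\wt G}\cong k[W_{n-1}]^{G'}$ induced by $\sigma$, which is exactly your commutative square. Your optional self-contained argument via Lemma \ref{PhiG'} is also sound, but the main route matches the paper.
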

Recall that the adjoint quotient map for $W_{n-1}$:
$$\Phi_{G'}:W_{n-1}\longrightarrow \mathbb{A}^{n-1},~~~~~~~~~~~x\mapsto (\phi_{0}'(x),\phi_{1}'(x),\cdots,\phi_{n-2}'(x)).$$
It follows from Lemma \ref{PhiG'} that $\Phi_{G'}|_{\Omega_{1}^{'}}$ is an isomorphism of varieties $\Omega_{1}^{'}\overset{\sim}{\rightarrow}\mathbb{A}^{n-1}$. Therefore its tangent map is an isomorphism $T_{x}(\Omega_{1}^{'})\rightarrow\mathbb{A}^{n-1}$ for all $x\in \Omega_{1}^{'}$.
Note that $\Phi_{G'}$ is constant on each $G'$-orbit $\O_x$ with $x\in W_{n-1}$ since $\phi_{i}'\in k[W_{n-1}]^{G'}$.
It follows that the tangent map $(d\Phi_{G'})_x$ is zero on the tangent space to $\O_x$:
\begin{equation}\label{constantonorbit}
(d\Phi_{G'})_x(T_x\O_x)=0,
\end{equation}
and
\begin{equation}\label{jiaoshiyigedian}
\Omega_{1}^{'}\cap\O_x=\{x\}\text{\quad for~~all ~~~~~}x\in\Omega_{1}^{'}.
\end{equation}
Furthermore, it follows from (\ref{constantonorbit}) that the isomorphism $T_{x}(\Omega_{1}^{'})\rightarrow\mathbb{A}^{n-1}$ induced by $(d\Phi_{G'})_x$ implies
\begin{equation}\label{qiekongjianjiaoshi0}
T_{x}(\Omega_{1}^{'})\cap T_x(\O_x)=\{0\} \text{\quad for~~all ~~~~~}x\in\Omega_{1}^{'}.
\end{equation}

In fact, similar to \cite[lemma 12]{Pr-1}, we have the stabilizer of $\D_{\e}$ in $G'$ is trivial.
\begin{lem}
We have $\dim T_x(\O_x)=\dim G'$ for all $x\in \Omega_{1}^{'}$.
\end{lem}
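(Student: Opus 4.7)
The plan is to apply the standard identification of the tangent space to an orbit with the image of the differential of the orbit map $g\mapsto g(x)$ at the identity. This differential sends $y\in\Lie(G')$ to $[y,x]\in W_{n-1}$, so its kernel is $\Lie(G')\cap (W_{n-1})^x$. Using $\Lie(G')=(W_{n-1})_0$ from Section \ref{1automorphismgroup}, the lemma reduces to showing
\[
(W_{n-1})_0 \cap (W_{n-1})^{\D_\e} = 0 \qquad \text{for every } \e\in\mathbb{A}^{n-1}.
\]

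Next I would unpack the centraliser $(W_{n-1})^{\D_\e}$. The proof of Lemma \ref{PhiG'} already established that $B_{n-1}^{\D_\e}=k$; applying Lemma \ref{minimalpolynomialischaracteristicpolynomial} to $W_{n-1}$ then gives that the minimal polynomial of $\rho(\D_\e)$ has degree $p^{n-1}$, whence the mutually commuting powers $\D_\e,\D_\e^p,\ldots,\D_\e^{p^{n-2}}$ are linearly independent. Combined with the inclusion $U_1\subseteq U_2$ (the Remark after Proposition \ref{pro-1}), transposed to $W_{n-1}$, these $n-1$ elements span the $(n-1)$-dimensional centraliser $(W_{n-1})^{\D_\e}$.

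The final step is to test this basis against $(W_{n-1})_0$ via the congruences
\[
\D_\e^{p^i}\equiv (-1)^i D_{i+1}\pmod{\m W_{n-1}}, \qquad i=0,\ldots,n-2,
\]
which were already derived inside the proof of Lemma \ref{PhiG'}. Since $(W_{n-1})_0=\m W_{n-1}$, the images of $\D_\e,\D_\e^p,\ldots,\D_\e^{p^{n-2}}$ in the quotient $W_{n-1}/(W_{n-1})_0$ are the linearly independent vectors $\pm D_1,\ldots,\pm D_{n-1}$, so no nontrivial linear combination of these powers can lie in $(W_{n-1})_0$, and the desired intersection vanishes.

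The only point that needs care is that in characteristic $p$ the stabiliser of $x$ in $G'$ may fail to be smooth, so the identity $\dim T_x(\O_x) = \dim G' - \dim\bigl(\Lie(G')\cap (W_{n-1})^x\bigr)$ must be read on the Lie-algebra side, via the differential of the orbit map, rather than deduced from dimensions of subgroup schemes. Once this point is granted, the remaining content is a straightforward linear-algebra consequence of computations already assembled in the paper.
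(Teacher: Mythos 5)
Your proof is correct, and it is more detailed than what the paper actually records: the paper proves this lemma essentially by the one-line remark preceding it, namely that (as in Premet's Lemma 12 for $W_n$) the stabilizer of $\D_{\e}$ in $G'$ is trivial, whence $\dim T_x(\O_x)=\dim \O_x=\dim G'$ because orbits are smooth. Your route instead computes the \emph{infinitesimal} stabilizer: you identify $(W_{n-1})^{\D_{\e}}$ as the span of $\D_{\e},\D_{\e}^p,\dots,\D_{\e}^{p^{n-2}}$ (via $B_{n-1}^{\D_{\e}}=k$, the degree of the minimal polynomial, and the inclusion $U_1\subseteq U_2$ for $W_{n-1}$), and then use the congruences $\D_{\e}^{p^i}\equiv(-1)^iD_{i+1}$ modulo $\m W_{n-1}=(W_{n-1})_0$ to conclude $(W_{n-1})_0\cap(W_{n-1})^{\D_{\e}}=0$. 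This yields $\dim[\Lie(G'),\D_{\e}]=\dim G'$, and combined with $[\Lie(G'),\D_{\e}]\subseteq T_x(\O_x)$ and $\dim T_x(\O_x)=\dim\O_x\leq\dim G'$ the lemma follows. Two remarks: first, your opening "standard identification" of $T_x(\O_x)$ with the image of the differential of the orbit map is in general only an inclusion in characteristic $p$ (the orbit map need not be separable), but since you only need the lower bound from that inclusion, and the upper bound comes for free from smoothness of orbits, the argument stands --- and you flag this issue yourself at the end. Second, your approach actually proves the stronger separability statement $\ker\ad(\D_{\e})\cap(W_{n-1})_0=\{0\}$, which is precisely the fact the paper invokes later (attributing it to the proof of Lemma \ref{PhiG'}) when showing that $(d\theta)_{(1,x)}$ is surjective; so your argument buys more than the paper's reduced-stabilizer shortcut and dovetails with the subsequent proposition.
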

Note that $\dim W_{n-1}=\dim G'+(n-1)$ since $\Lie(G')=(W_{n-1})_0$.
Hence the above lemma and (\ref{qiekongjianjiaoshi0}) implies that
\begin{equation}\label{zhiheshiWn-1}
W_{n-1}=T_x(\O_x)\oplus T_x(\Omega_{1}^{'})\text{\quad for~~all ~~~~~}x\in\Omega_{1}^{'}.
\end{equation}
Consider the morphism
\begin{equation}\label{smoothmorphism}
\theta:G'\times \Omega_{1}^{'}\rightarrow W_{n-1}, \quad (g,x)\mapsto g.x
\end{equation}
\begin{prop}
All tangent maps $(d\theta)_{(g,x)}$ with $(g,x)\in G'\times \Omega_{1}^{'}$ are surjective. In particular, $\theta$ is a smooth morphism.
\end{prop}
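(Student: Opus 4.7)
The plan is to reduce surjectivity of $(d\theta)_{(g,x)}$ at an arbitrary point to the case $g=e$ by exploiting the $G'$-equivariance of $\theta$ in its first argument, and then to invoke the direct-sum decomposition (\ref{zhiheshiWn-1}). For any $g_0\in G'$, the map $L_{g_0}\times\id_{\Omega_{1}^{'}}$ is an automorphism of $G'\times\Omega_{1}^{'}$ sending $(e,x)$ to $(g_0,x)$, and the action of $g_0$ on $W_{n-1}$ is an automorphism; these intertwine $\theta$, since
\[\theta\bigl((L_{g_0}\times\id)(g,x)\bigr)=(g_0g).x=g_0.\theta(g,x).\]
By the chain rule, $(d\theta)_{(g_0,x)}$ is surjective if and only if $(d\theta)_{(e,x)}$ is. Thus it suffices to treat points of the form $(e,x)$ with $x\in\Omega_{1}^{'}$.

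At such a point the tangent space splits as $T_{(e,x)}(G'\times\Omega_{1}^{'})=\Lie(G')\oplus T_x(\Omega_{1}^{'})$. The restriction of $(d\theta)_{(e,x)}$ to the first summand is the differential at $e$ of the orbit map $g\mapsto g.x$, whose image is by standard theory of algebraic group actions exactly $T_x(\O_x)$ (the factorisation $G'\twoheadrightarrow G'/\mathrm{Stab}_{G'}(x)\overset{\sim}{\to}\O_x$ forces the orbit map to have differential surjecting onto $T_x(\O_x)$ at $e$). The restriction to the second summand is the tautological inclusion $T_x(\Omega_{1}^{'})\hookrightarrow T_x(W_{n-1})=W_{n-1}$. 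Therefore
\[\im (d\theta)_{(e,x)}\supseteq T_x(\O_x)+T_x(\Omega_{1}^{'})=W_{n-1},\]
where the last equality is precisely (\ref{zhiheshiWn-1}). Surjectivity follows.

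For the ``In particular'' clause, both source $G'\times\Omega_{1}^{'}$ (a product of a smooth algebraic group and an affine space) and target $W_{n-1}$ (an affine space) are smooth varieties, and a morphism between smooth varieties is smooth precisely when its differential is surjective at every point; hence $\theta$ is smooth. There is no substantive obstacle in this argument, as all the hard work has already been done upstream: the preceding lemma supplies $\dim T_x(\O_x)=\dim G'$ via triviality of the stabiliser, which together with (\ref{qiekongjianjiaoshi0}) and the dimension count $\dim W_{n-1}=\dim G'+(n-1)$ forces the decomposition (\ref{zhiheshiWn-1}). The only point worth emphasizing is that the equivariance reduction legitimately propagates surjectivity from one fibre $\{e\}\times\Omega_{1}^{'}$ to the whole domain $G'\times\Omega_{1}^{'}$, so that no pointwise verification is needed away from the identity.
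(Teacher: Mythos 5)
Your overall architecture matches the paper's: split the tangent space at $(1,x)$, use the decomposition (\ref{zhiheshiWn-1}), and transport surjectivity to arbitrary $(g,x)$ by equivariance (the paper delegates that last step to \cite[7.8]{Jan}). But there is a genuine gap at the central step. You assert that the differential at the identity of the orbit map $g\mapsto g.x$ surjects onto $T_x(\O_x)$ ``by standard theory of algebraic group actions,'' via the factorisation $G'\twoheadrightarrow G'/\mathrm{Stab}_{G'}(x)\rightarrow \O_x$. That is a characteristic-zero argument. Here ${\rm char}\,k=p>3$, and the bijective morphism $G'/\mathrm{Stab}_{G'}(x)\rightarrow\O_x$ need not be an isomorphism: if the orbit map is inseparable, the image of its differential at $1$ is $[\Lie(G'),x]$, of dimension $\dim G'-\dim\{y\in\Lie(G')\mid [y,x]=0\}$, which can be strictly smaller than $\dim T_x(\O_x)=\dim G'-\dim \mathrm{Stab}_{G'}(x)$. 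Triviality of the (reduced) stabilizer, which the preceding lemma supplies, does not by itself exclude a nontrivial infinitesimal stabilizer, so surjectivity onto $T_x(\O_x)$ is exactly what has to be proved here, not quoted.

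This is where the paper does the actual work: it identifies the image of the first summand concretely as $[(W_{n-1})_0,x]\subseteq T_x(\O_x)$ and then invokes $\ker\ad(x)\cap (W_{n-1})_0=\{0\}$, extracted from the proof of Lemma \ref{PhiG'} (the centralizer of $\D_{\e}$ is spanned by $\D_{\e},\D_{\e}^{p},\cdots,\D_{\e}^{p^{n-2}}$, none of whose nonzero combinations lies in $(W_{n-1})_0=\m W_{n-1}$). Hence $\dim[(W_{n-1})_0,x]=\dim (W_{n-1})_0=\dim G'=\dim T_x(\O_x)$, forcing $[(W_{n-1})_0,x]=T_x(\O_x)$, and only then does (\ref{zhiheshiWn-1}) give surjectivity. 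Your proof needs this separability computation inserted; the rest of it (the splitting of $T_{(1,x)}(G'\times\Omega_{1}^{'})$, the identification of the second summand with the inclusion $T_x(\Omega_{1}^{'})\hookrightarrow W_{n-1}$, the equivariance reduction, and the smoothness criterion for morphisms of smooth varieties) is correct and agrees with the paper.
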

\begin{proof}
The tangent map
$$(d\theta)_{(1,x)}:T_{1}(G')\times T_x(\Omega_{1}^{'})\rightarrow T_x(W_{n-1})=W_{n-1}.$$
Consider the composition of
$\theta$ with $$G'\rightarrow G'\times \Omega_{1}^{'},h\mapsto (h,x),$$
we have
$(d\theta)_{(1,x)}$ maps $(y,0)$ to $[y,x]$ for any $y\in T_1(G')=(W_{n-1})_0$. Furthermore, consider the composition $\theta$ with
$$\Omega_{1}^{'}\rightarrow G'\times \Omega_{1}^{'},z\mapsto (1,z),$$
we have $(d\theta)_{(1,x)}$ maps $(0,y')$ to $y'$ for any $y'\in T_x(\Omega_{1}^{'})$.
Therefore $(d\theta)_{(1,x)}$ has image $[(W_{n-1})_0,x]+T_x(\Omega_{1}^{'})\subset T_x(\O_x)+T_x(\Omega_{1}^{'})$.
It follow from the proof of Lemma \ref{PhiG'} that $$\ker\ad(x)\cap (W_{n-1})_0=\{0\}.$$ Hence
$$\im (d\theta)_{(1,x)}=T_x(\O_x)+T_x(\Omega_{1}^{'}).$$
So (\ref{zhiheshiWn-1}) implies that $(d\theta)_{(1,x)}$ is surjective for all $x\in \Omega_{1}^{'}$.

For arbitrary $(g,x)\in G'\times \Omega_{1}^{'}$, we just use the same argument in \cite[7.8]{Jan}.
\end{proof}
\begin{rem}
1).~The above result is very similar to the transverse slice in the classical setting (\cite[Chapter III]{Sl}).\\
2).~A differential criterion for dominant ensures that $\theta$ is dominant, i.e., $\overline{G'.\Omega_{1}^{'}}=W_{n-1}$.
In fact, since the stabilizer of $\D_{\e}\in \Omega_{1}^{'}$ in $G'$ is trivial, by a comparison of dimensions, the map $\theta$ is dominant.\\
3).~Proposition \ref{transverseslicetoSn} does not implies that $\overline{\wt G.\Omega_{1}}=S_n$, in fact, the set $\Omega_{1}$ is "too small",
$\overline{\wt G.\Omega_{1}}\neq S_n$ (\cite[Proposition 5.2]{WCL}).
\end{rem}

\subsection{nilpotent elements in $S_n$}
In this section, we want to describe the nilpotent elements in $S_n$.
Recall that  $\sigma: W_{n-1}\rightarrow S_n$ is the homomorphism of restricted Lie algebras defined in section \ref{propertiesofvgforSn}.
\begin{lem}\label{bukongyiweizhebukong}
Let $\wt G$ be the restricted automorphism group of $S_n$ and $x\in S_n$. If $\overline{\wt G.x}\cap \sigma(W_{n-1})\neq \emptyset$, then
$\overline{\wt G.x}\cap T_n\neq \emptyset$, where $$T_n=\langle x_1D_1-x_nD_n,\cdots,x_{n-1}D_{n-1}-x_nD_n\rangle.$$
\end{lem}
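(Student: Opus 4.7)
The plan is to reduce the claim to Lemma 14 of \cite{Pr-1} applied to $W_{n-1}$, by constructing a lift of $G'=\Aut(W_{n-1})$ into $\wt G$ that is compatible with $\sigma$. Since $\overline{\wt G.y}\subseteq\overline{\wt G.x}$ whenever $y\in\overline{\wt G.x}$, I first pick any $y=\sigma(z)$ in the given nonempty intersection and replace $x$ by $\sigma(z)$; it then suffices to show $\overline{\wt G.\sigma(z)}\cap T_n\neq\varnothing$ for $z\in W_{n-1}$.

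Next I would build the lift $\psi\colon G'\to\wt G$. Given $\phi\in G'\cong\Aut(B_{n-1})$, the Jacobian $J(\phi)=\det(\p_i\phi(x_j))_{i,j<n}\in B_{n-1}$ has nonzero constant term (since $\phi$ preserves the unique maximal ideal and induces an automorphism of $\m/\m^2$), hence is a unit in $B_{n-1}$. Define $\tilde\phi\in\Aut(B_n)$ by
$$\tilde\phi(x_i)=\phi(x_i)\ \ (i<n),\qquad \tilde\phi(x_n)=J(\phi)^{-1}x_n.$$
The Jacobian matrix of $\tilde\phi$ is block lower-triangular with determinant $J(\phi)\cdot J(\phi)^{-1}=1$, so $\tilde\phi\in\wt G$ by (\ref{aut1}). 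The multiplicativity $J(\phi_1\circ\phi_2)=J(\phi_1)\cdot\phi_1(J(\phi_2))$ gives $\widetilde{\phi_1\circ\phi_2}=\tilde\phi_1\circ\tilde\phi_2$, so $\psi\colon\phi\mapsto\tilde\phi$ is a group homomorphism.

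The crucial step is then the intertwining identity
$$\tilde\phi_{\ast}\bigl(\sigma(D)\bigr)=\sigma\bigl(\phi_{\ast}(D)\bigr)\qquad\text{for all }D\in W_{n-1}.$$
Evaluating both sides on $x_j$ with $j<n$ immediately yields $\phi_{\ast}(D)(x_j)$ since the $x_nD_n$ terms contribute zero. Evaluating on $x_n$, after computing $\tilde\phi^{-1}(x_n)=\phi^{-1}(J(\phi))\,x_n$ and $\tilde\phi_{\ast}(D_n)=J(\phi)D_n$, reduces to the change-of-variables identity for divergence
$$\div\bigl(\phi_{\ast}(D)\bigr)=\phi\bigl(\div(D)\bigr)+J(\phi)\cdot\phi_{\ast}(D)\bigl(J(\phi)^{-1}\bigr),$$
valid for any $\phi\in\Aut(B_{n-1})$ and $D\in W_{n-1}$. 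Establishing this divergence identity in the truncated-polynomial setting is the main computational obstacle; it follows from a careful expansion using the Leibniz rule and the matrix form of the chain rule.

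Finally, I apply Lemma 14 of \cite{Pr-1} to $W_{n-1}$ and its standard torus $T'=\langle x_1D_1,\dots,x_{n-1}D_{n-1}\rangle$ to obtain $w\in\overline{G'.z}\cap T'$. Because $\sigma$ is $k$-linear and injective with closed image, it is a closed embedding of varieties and commutes with Zariski closure. Combined with the intertwining, this gives
$$\sigma(w)\in\sigma\bigl(\overline{G'.z}\bigr)=\overline{\sigma(G'.z)}=\overline{\psi(G').\sigma(z)}\subseteq\overline{\wt G.\sigma(z)}\subseteq\overline{\wt G.x}.$$
Moreover $\sigma(x_iD_i)=x_iD_i-x_nD_n$ for $i<n$, so $\sigma$ restricts to a linear isomorphism $T'\xrightarrow{\sim}T_n$, whence $\sigma(w)\in\overline{\wt G.x}\cap T_n$, completing the proof.
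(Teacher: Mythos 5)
Your proof is correct and follows essentially the same route as the paper: pull the problem back to $W_{n-1}$ via $\sigma$, apply Premet's Lemma~14 to get a torus element in $\overline{G'.z}\cap T'$, and push forward using $\sigma(T')=T_n$. The only difference is that you explicitly justify the inclusion $\sigma(\overline{G'.z})\subseteq\overline{\wt G.\sigma(z)}$ by constructing the lift $\phi\mapsto\tilde\phi$ with $\tilde\phi(x_n)=J(\phi)^{-1}x_n$ and verifying the intertwining via the divergence identity (which does hold in the truncated setting), whereas the paper asserts this equivariance without comment; your version is therefore more complete at that step.
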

\begin{proof}
Take $y\in\overline{\wt G.x}\cap \sigma(W_{n-1})$ and denote by $y'=\sigma^{-1}(y)\in W_{n-1}$.
Let $$
T'_{n-1}=\langle x_1D_1,\cdots,x_{n-1}D_{n-1}\rangle
$$ be the standard maximal torus of $W_{n-1}$.
Thanks to \cite[Lemma 14]{Pr-1}, $\overline{G'.y'}\cap T'_{n-1}\neq \emptyset$.
Let $z'\in\overline{G'.y'}\cap T'_{n-1}$. Note that $\sigma(T'_{n-1})=T_n$.
Hence
$$\sigma(z')\in\sigma(\overline{G'.y'}\cap T'_{n-1})\subset\overline{\wt G.\sigma(y')}\cap \sigma(T'_{n-1})=\overline{\wt G.y}\cap T_{n}\subset \overline{\wt G.x}\cap T_n.$$
\end{proof}
\begin{lem}\label{bukong}
Keep the notations as above. Let $x=\sum\limits_{i=1}^{n}f_iD_i\in S_n$. If $x_{i_0}|f_{i_0}$ for some $i_0\in\{1,\cdots,n\}$, then
$\overline{\wt G.x}\cap T_n\neq \emptyset$.
\end{lem}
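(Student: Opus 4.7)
The plan is to combine Lemma \ref{bukongyiweizhebukong} with a one-parameter limit argument in $\wt G$, in the style of the proof of Theorem \ref{fiberbukeyue}. By Lemma \ref{bukongyiweizhebukong}, it suffices to produce an element of $\overline{\wt G.x}\cap\sigma(W_{n-1})$. I first reduce to the case $i_0=n$: if $i_0<n$, let $\tau\in\Aut(B_n)$ be the involution swapping $x_{i_0}$ and $x_n$ and fixing the other variables. Its Jacobian is a permutation matrix of determinant $\pm 1$, so by (\ref{aut1}) it lies in $\wt G$. Conjugation by $\tau$ satisfies $\tau\circ D_i\circ\tau^{-1}=D_{\tau(i)}$, so the coefficient of $D_n$ in $\tau\cdot x$ is $\tau(f_{i_0})$, which is divisible by $\tau(x_{i_0})=x_n$. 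Replacing $x$ by $\tau\cdot x$, we may assume $x_n\mid f_n$.

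Next, for $a\in k^*$ I introduce the scaling $\tau_a\in\Aut(B_n)$ with $\tau_a(x_n)=ax_n$ and $\tau_a(x_j)=x_j$ for $j<n$; its Jacobian determinant is $a$, so (\ref{aut1}) yields $g_a\in\wt G$. A direct computation shows $g_a(D_n)=a^{-1}D_n$ and $g_a(D_j)=D_j$ for $j<n$. Expanding each coefficient as $f_i=\sum_{j\geq 0}f_{i,j}x_n^j$ with $f_{i,j}\in B_{n-1}$, and using $f_{n,0}=0$, one computes
$$g_a\cdot x=\sum_{i=1}^{n-1}\Bigl(\sum_{j\geq 0}f_{i,j}a^jx_n^j\Bigr)D_i+\Bigl(\sum_{j\geq 1}f_{n,j}a^{j-1}x_n^j\Bigr)D_n.$$
The curve $a\mapsto g_a\cdot x$ therefore extends across $a=0$, with limit $y=\sum_{i=1}^{n-1}f_{i,0}D_i+f_{n,1}x_nD_n\in\overline{\wt G.x}$.

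Finally, I check that $y\in\sigma(W_{n-1})$. Since $y\in S_n$, the divergence-free condition gives $\sum_{i=1}^{n-1}D_if_{i,0}+D_n(f_{n,1}x_n)=0$; as $f_{n,1}\in B_{n-1}$, one has $D_n(f_{n,1}x_n)=f_{n,1}$, whence $f_{n,1}=-\div\bigl(\sum_{i=1}^{n-1}f_{i,0}D_i\bigr)$. Hence $y=\sigma\bigl(\sum_{i=1}^{n-1}f_{i,0}D_i\bigr)\in\sigma(W_{n-1})$, and Lemma \ref{bukongyiweizhebukong} finishes the proof.

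The main technical point is this last verification: it is not obvious a priori that the scaling limit lands in the image of $\sigma$, but the interaction between the divergence-free condition defining $S_n$ and the special shape of $y_n$ (linear in $x_n$ with coefficient in $B_{n-1}$) forces exactly this. Once observed, the rest is routine bookkeeping together with an invocation of the preceding lemma.
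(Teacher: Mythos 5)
Your proposal is correct and follows essentially the same route as the paper's own proof: reduce to $i_0=n$ by the coordinate swap, degenerate $x$ along the one-parameter scaling of $x_n$ to land in $\sigma(W_{n-1})$ via the divergence-free condition, and then invoke Lemma \ref{bukongyiweizhebukong}. Your version is in fact slightly more careful in verifying that the automorphisms lie in $\wt G$ and in the consistency of the scaling computation, but the argument is the same.
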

\begin{proof}
Let $\tau_{i_0}$ be an automorphism of $B_n$ defined by:
$$\tau_{i_0}(x_j)=x_j \text{~if~} j\neq i_0 \text{~or~} n,\tau_{i_0}(x_{i_0})=x_n,\tau_{i_0}(x_{n})=x_{i_0}.$$
Let $g_{i_0}$ be the corresponding automorphism of $S_n$. Write $g_{i_0}.x=\sum\limits_{i=1}^nf'_iD_i$, it is easy to verify that $x_n|f'_n$.
Without loss of generality, we can assume $i_0=n$, i.e., $x_n|f_n$.
$f_n(x_1,\cdots,x_n)$ can be uniquely written as
$$f_n(x_1,\cdots,x_n)=\sum_{j\geq 1}f_{n_j}(x_1,\cdots,x_{n-1})x_n^j.$$
For any $0\neq c\in k$, Define $\tau_c\in \Aut(B_n)$:
$$\tau_c(x_1)=x_1,\cdots,\tau_c(x_{n-1})=x_{n-1},\tau_c(x_n)=c^{-1}x_n.$$
Let $g_c$ be the corresponding automorphism, then
$$g_c(x)=\sum\limits_{i=1}^{n-1}f_i(x_1,\cdots,x_{n-1},cx_n)D_i+\sum_{j\geq 1}f_{n_j}(x_1,\cdots,x_{n-1})c^{j-1}x_n^jD_n.$$
By taking limit as $c$ goes to $0$, it follows that the element
$$g_0(x):=\sum\limits_{i=1}^{n-1}f_i(x_1,\cdots,x_{n-1},0)D_i+f_{n_1}(x_1,\cdots,x_{n-1})x_nD_n$$
belongs to the closure of $\wt G$-orbit of $x$.
Since $g_0(x)\in S_n$, we have $\div(g_0(x))=0$. Therefore,
$$g_0(x)=\D_1-\div(\D_1)x_nD_n=\sigma(\D_1),$$
where $\D_1=\sum\limits_{i=1}^{n-1}f_i(x_1,\cdots,x_{n-1},0)D_i\in W_{n-1}$.
Hence, $\sigma(\D_1)\in\overline{\wt G.x}\cap \sigma(W_{n-1}).$
Lemma \ref{bukongyiweizhebukong} implies $\overline{\wt G.x}\cap T_n\neq \emptyset$.
\end{proof}
Define $\Omega$ as follows,
$$\Omega=\bigcup\limits_{\e\in\mathbb A^{n-1}}\Omega^{\e},$$
where $\Omega^{\e}$ defined in (\ref{Omega}).
In \cite[Proposition 5.2]{WCL}, the authors proved $\overline{\wt G.\Omega}=S_n$
and $\overline{\wt G.\Omega^{0}}=\N(S_n)$ (see also Theorem \ref{fiberbukeyue}).
The following result is a direct consequence of Lemma \ref{bukong}.
\begin{cor}
There exists a dense subset $\Delta$ of $S_n$ such that $\overline{\wt G.x}\cap T_n\neq\emptyset$ for any $x\in\Delta$.
Furthermore, there exists a dense subset $\Lambda$ of nilpotent variety $\N(S_n)$ such that $0\in \overline{\wt G.x}$ for any $x\in\Lambda$.
\end{cor}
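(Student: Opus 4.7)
The plan is to produce both $\Delta$ and $\Lambda$ as $\wt G$-saturations of subsets on which Lemma \ref{bukong} applies with the index $i_0=n$, exploiting the density statements $\overline{\wt G\cdot\Omega}=S_n$ and $\overline{\wt G\cdot\Omega^{0}}=\N(S_n)$ from \cite[Proposition 5.2]{WCL} and Theorem \ref{fiberbukeyue}. Concretely, I would set
\[
\Delta_0=\{\,x=\sum_{i=1}^n f_i D_i\in S_n : x_n\mid f_n\,\},\qquad \Delta=\wt G\cdot\Delta_0,\qquad \Lambda=\wt G\cdot\Omega^{0}.
\]

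First I would verify $\Omega\subseteq\Delta_0$. For a typical element $\Omega^{\e}_{f_1,\ldots,f_{n-1}}$ the $D_n$-coefficient splits into the piece $-\div(\D_{\e})\,x_n$ coming from $\sigma(\D_{\e})$, which is obviously divisible by $x_n$, and the piece $x_1^{p-2}\cdots x_{n-1}^{p-2}(f_1x_2\cdots x_{n-1}+\cdots+x_1\cdots x_{n-2}f_{n-1})$; since each $f_i$ lies in $K$ (polynomials in $x_n$ alone) and satisfies $\deg f_i\geq 2$, each $f_i$ is divisible by $x_n^{2}$, so this second piece is divisible by $x_n$ as well. Combined with the density statements above, this yields that $\Delta$ is dense in $S_n$ and $\Lambda$ is dense in $\N(S_n)$. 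For the first assertion of the corollary, given $x\in\Delta$, I write $x=g.y$ with $y\in\Delta_0$ and apply Lemma \ref{bukong} to $y$ (with $i_0=n$) to get $\overline{\wt G.y}\cap T_n\neq\emptyset$; since $\overline{\wt G.x}=\overline{\wt G.y}$, this finishes the first half.

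For the second assertion I take $x\in\Lambda$, write $x=g.y$ with $y\in\Omega^{0}$, and invoke the scaling argument from the proof of Theorem \ref{fiberbukeyue} (letting $a\to 0$ in $\tau(x_n)=ax_n$) to produce $\sigma(\D_{0})\in\overline{\wt G.y}$; since $\div\D=0$ one has $\sigma(\D)=\D$, whose $D_n$-coefficient vanishes, so $\D\in\Delta_0$. Lemma \ref{bukong} then gives $\overline{\wt G.\D}\cap T_n\neq\emptyset$. The main (and only nontrivial) obstacle will be to upgrade this from ``nonempty'' to ``contains $0$''. For that, I would use that $\D$ is $p$-nilpotent (by Lemma \ref{PhiG'}, with $\e=0$ the characteristic polynomial is $t^{p^{n-1}}$), so the closed $\wt G$-invariant variety $\N(S_n)$ contains $\overline{\wt G.\D}$; on the other hand $T_n$ is a torus and hence $T_n\cap\N(S_n)=\{0\}$. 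This forces $\overline{\wt G.\D}\cap T_n=\{0\}$, so $0\in\overline{\wt G.\D}\subseteq\overline{\wt G.y}=\overline{\wt G.x}$, as required.
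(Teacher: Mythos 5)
Your proof is correct and follows exactly the route the paper intends (the paper offers no written proof beyond calling the corollary a ``direct consequence of Lemma \ref{bukong}'' together with the density statements $\overline{\wt G.\Omega}=S_n$ and $\overline{\wt G.\Omega^{0}}=\N(S_n)$): you check that $\Omega\subseteq\Delta_0$, saturate under $\wt G$, and for the nilpotent half combine Lemma \ref{bukong} with $T_n\cap\N(S_n)=\{0\}$. The only cosmetic remark is that your detour through $\D=\sigma(\D_0)$ in the second half is unnecessary, since any $y\in\Omega^{0}$ already lies in $\Delta_0\cap\N(S_n)$ and the same semisimple-versus-nilpotent argument applies to $y$ directly.
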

\begin{rem}
Premet proved that $x\in W_n$ is nilpotent if and only if $0\in\overline{G.x}$ (\cite[Corollary 6]{Pr-1}).
It is clear $x\in S_n$ is nilpotent if $0\in \overline{\wt G.x}$. The above Corollary give a partial inverse problem. We conjecture that $\overline{\wt G.x}\cap\sigma(W_{n-1})\neq\emptyset$ for any $x\in S_n$.
\end{rem}


\begin{thebibliography}{99}
\bibitem[B]{Bois} J.-M. Bois,~{\itshape Generators of simple Lie algebra II.}
Forum Math., \textbf{22}, 525-538, 2010.
\bibitem[BFS]{BFS} J.-M. Bois, R. Farnsteiner and B. Shu,~{\itshape Weyl groups for non-classical restricted
Lie algebras and the Chevalley Restriction Theorem.} Forum Math., DOI 10.1515/forum-2011-0145, 2012.
\bibitem[D]{D} S.P.Demushkin,~{\itshape Cartan subalgebras of simple Lie $p$-algebras $W_n$ and $S_n$.} Sib. Mat. Zh.,
\textbf{11}, No. 2, 310-325, 1970.
\bibitem[Jan]{Jan} J.-C. Jantzen,~{\itshape Nilpotent orbits in representation theory.}
Progr. Math. 228, Birkhauser-Verlag, Boston, 1-211, 2004.
\bibitem[Ko]{Ko} B. Kostant,~{\itshape Lie group representations on polynomial rings.}
Amer. J. Math.,\textbf{85}, 327-404, 1963.
\bibitem[LN]{LN} Z. Lin and D.K. Nakano,~{\itshape Algebraic group actions in the cohomology theory of Lie algebras of Cartan type.}
J. Algebra, \textbf{179}, 852-888, 1996.
\bibitem[N]{N} K.J. Nowak,~{\itshape Flat morphism between regular varieties.}
 Univ. Iagel. Acta Math. No. 35, 243-246, 1997.
\bibitem[P90]{P1} A. A. Premet,~{\itshape Regular Cartan subalgebras
and nilpotent elements in restricted Lie algebras.}
Math. USSR. Sbornik, Vol. 66, No.6, pp. 555-570, 1990.
\bibitem[P92]{Pr-1} A. A. Premet,~{\itshape The theorem on restriction of invariants and nilpotent elements in $W_n$}.
Math. USSR. Sbornik \textbf{73}, No. 1, 135-159, 1992.
\bibitem[P14]{Pr-2} A. A. Premet,~{\itshape Regular derivations of truncated polynomial rings}.
 arXiv:1405.2426 [math.RA], 2014.
\bibitem[Sk]{Sk} S. Skryabin,~{\itshape Invariant polynomial functions on the Possion algebra in characteristic $p$.}
J. Algebra, \textbf{256}, 146-179, 2002.
\bibitem[Sl]{Sl} P. Slodowy,~{\itshape Simple sigularities and simple algebraic groups.}
Lecture Notes in Math. \textbf{815}, Springer, Berlin etc., 1980.
\bibitem[SF]{SF} H. Strade and R. Farnsteiner,~{\itshape Modular Lie Algebras and their
Representations.} Pure and Applied Mathematics 116. Marcel Dekker,
Inc. New York, 1988.
\bibitem[W]{W} R. Wilson,~{\itshape  Automorphisms of graded Lie algebra of Cartan type.} Comm.
Algebra, 3(7), pp. 591-613, 1975.
\bibitem[WCL]{WCL} J. Wei, H. Chang and X. Lu,~{\itshape The variety of nilpotent elements and invariant polynomial functions on the special algebra $S_n$.}
Forum Math., DOI 10.1515 /forum-2012-0163, 2013.

\end{thebibliography}
\end{document}